\newcommand{\addressumushort}{Mathematics and Mathematical Statistics, Ume{\aa}~University, Sweden}
\newcommand{\addressjushort}{Mechanical Engineering, J\"onk\"oping University, Sweden}
\newcommand{\addressuclshort}{Mathematics, University College London, UK}
\newcommand{\tn}{|\kern-1pt|\kern-1pt|}
\newcommand{\mcT}{\mathcal{T}}
\newcommand{\mcO}{\mathcal{O}}
\DeclareMathOperator{\Div}{Div}
\newcommand{\IR}{\mathbb{R}}
\title{Cut Finite Elements for Convection in Fractured Domains}
\date{}
\author{Erik Burman,
Peter Hansbo,
Mats G. Larson and
Karl Larsson
}
\begin{document}
\maketitle
\begin{abstract}
We develop a cut finite element method (CutFEM) for the convection problem in a so called fractured domain which is a union of manifolds of different dimensions such that a $d$ dimensional component always resides on the boundary of a $d+1$ dimensional component. This type of domain can for instance be used to model porous media with embedded fractures that may intersect. The convection problem is formulated in a compact form suitable for analysis using natural abstract directional derivative and divergence operators. The cut finite element method is posed on a fixed background mesh that covers the domain and the manifolds are allowed to cut through a fixed background mesh in an arbitrary way. We consider a simple method based on continuous piecewise linear elements together with weak enforcement of the coupling conditions and stabilization. We prove a priori error estimates and present illustrating numerical examples.
\end{abstract}

\section{Introduction}
\paragraph{Fractured Domains.}
Transport phenomena in media with complicated microstructure occur in several 
applications for instance transport in porous media  and composite materials. The 
properties of the microstructure may have different characteristics ranging from 
stochastic to highly structured or a combination of these. In this work we focus on 
problems where the microstructure consists of embedded surfaces and their 
intersections. The surfaces can be used to model fractures or thin embedded 
sheets with different transport properties. We refer to such domains as fractured 
domains, see examples in Figure~\ref{fig:fractured-schematic}.

\begin{figure}
\centering
\begin{subfigure}[t]{.3\linewidth}\centering
\includegraphics[width=0.8\linewidth]{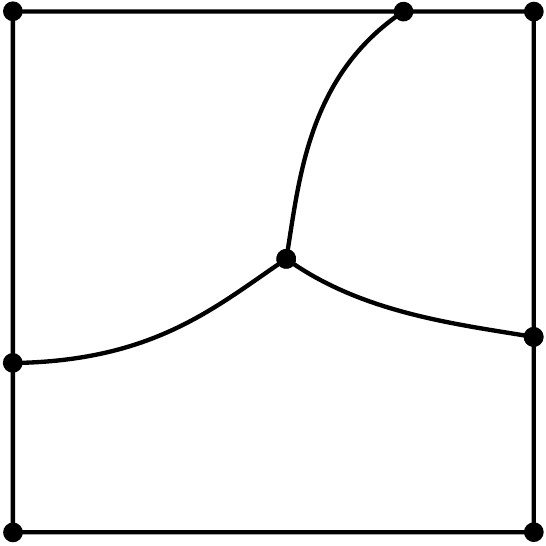}
\end{subfigure}
\begin{subfigure}[t]{.3\linewidth}\centering
\includegraphics[width=0.8\linewidth]{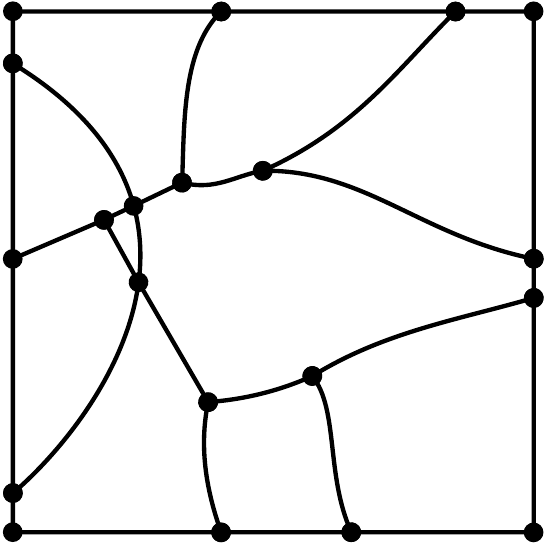}
\end{subfigure}
\caption{Two example fractured domains in 2D.}
\label{fig:fractured-schematic}
\end{figure}

\paragraph{New Contributions.}
A fractured domain in $\IR^n$ is a disjoint union of smooth manifolds of dimension 
$d=0,\dots,n$, constructed in such a way that a $d$ dimensional component 
always reside on the boundary of a $d+1$ dimensional component. These domains 
are also called mixed-dimensional or stratified domains. See the recent work 
\cite{BoNoVa17} where a similar description is used to study the pressure problem. 
On such a domain we consider a first order system of hyperbolic equations which 
models transport in fractured media. 

Introducing  convenient multi-dimensional directional derivative and divergence 
operators the problem may be formulated in an abstract form similar to the standard 
one field transport problem on a domain in $\IR^n$. 

We develop a cut finite element method, see \cite{BurClaHanLarMas15} for an introduction, 
which is based on embedding the composite domain into a fixed background mesh and then 
for each of the components we define the active mesh as the set of all elements that 
intersect the component. Note that in this way we obtain one active mesh for each 
component of the domain and thus certain elements will appear in several meshes. 
The active meshes are each equipped with a continuous finite element space and 
the finite element method is obtained by stabilizing the variational formulation using 
certain stabilization terms. Other methods, for instance the
discontinuous Galerkin method, may 
be used as well but here we stay in the simplest framework of
continuous finite element spaces and Galerkin least-squares stabilization.

Using the abstract framework the formulation of the method is straightforward and 
the basic coercivity result also follows. Combining coercivity with
the consistency of the finite element method and applying interpolation 
error estimates for cut finite element methods on embedded manifolds 
\cite{BurHanLarMas16}, we obtain a priori error estimates that are
optimal in the sense typical for stabilized finite element methods
applied to the transport equation.

\paragraph{Earlier Work.} 
The computation of flows in fractured media has received increasing
attention lately. For modelling of the equations of flow and transport
in porous media we refer to \cite{MR1911534,MR2359657} and 
in particular the mixed dimensional models presented in
\cite{NoBo17}, and \cite{BoNoVa17}.

Finite volume approaches have been proposed
\cite{2016arXiv160106977B,MR3489127,MR3671645} and virtual elements 
in  \cite{FumKei17}.  For stochastic 
methods we refer to \cite{MR3372046,MR3595292}. Various model reduction
techniques have been proposed such as
\cite{MR3489127,MR3307584}. Other work considers meshed fractures
\cite{MR2400465} or particle methods \cite{MR3437907}.

Compared to meshed methods cut finite element methods have the advantage 
that we do not need to construct a mesh that fits a possibly complex arrangement 
of fractures. For surface 
surface partial differential equations this approach, also called trace finite 
elements, was first introduced in \cite{OlReGr09} and has then been developed 
in different directions including stabilization \cite{BurHanLar15} higher order approximations in \cite{Reu15}, 
discontinuous Galerkin methods \cite{BurHanLarMas17}, transport problems 
\cite{OlsReuXu14-b,2015arXiv151102340B}, embedded membranes \cite{CenHanLar16}, 
coupled bulk-surface problems \cite{BurHanLarZah16} and \cite{GroOlsReu15}, 
minimal surface problems \cite{CenHanLar15}, and time 
dependent problems on evolving surfaces \cite{HanLarZah16,OlsReu14,OlsReuXu14}, and \cite{Zah17}. We also refer to the overview article 
\cite{BurClaHanLarMas15} and the references therein. For the present
work we also draw on experiences from the paper \cite{BurHanLarMas16},
where CutFEMs on on embedded manifolds of arbitrary codimensions was
considered and \cite{MR3709202} for the design of CutFEMs on
composite surfaces. 

\paragraph{Outline.} In Section 2 we introduce the notion of fractured domains, the 
abstract differential operators on these domains, and formulate an integration by parts 
formula, and formulate the model problem both in componentwise and abstract form. 
In Section 3 we formulate the finite element method. In Section 4 we derive a priori 
error estimates. In Section 5 we present numerical results. In Section 6 we draw some 
conclusions and mention directions for future work.

\section{The Model Problem}

\subsection{The Domain and Function Spaces}
\label{sec:defs}

We here introduce the notation needed to describe a fractured domain and define
the appropriate function spaces on such a domain.

\paragraph{Composite Domain.} Let $\Omega$ be a 
domain in $\IR^n$ such that  
\begin{itemize}
\item There is a partition $\mcO = \{\Omega_d\}_{d=0}^n$,
\begin{equation}
\Omega = \cup_{d=0}^n \Omega_d
\end{equation}
\item For each $\Omega_d \in \mcO$ there is a partition $\mcO_d  = \{ \Omega_{d,i} \}_{i=1}^{n_d}$,
\begin{equation}
\Omega_d = \cup_{i=1}^{n_d} \Omega_{d,i} 
\end{equation}
where each $\Omega_{d,i}$ is a smooth $d$-dimensional manifold with 
boundary $\partial \Omega_{d,i}$.
\item The partition satisfies 
\begin{equation}
 \partial \Omega_{d,i} \subset \cup_{l=1}^{d-1} \Omega_{l}  \qquad i = 1,\dots, n_d, \quad d =0,\dots,n
\end{equation}
\item We define the boundary operators 
\begin{equation}
\partial_d \mcO_{d} = \bigsqcup_{i=1}^{n_d} \partial \Omega_{d,i}
\qquad  
\partial \mcO = \bigsqcup_{d=0}^n \partial \mcO_{d}
\end{equation}
where $\sqcup$ denotes the disjoint union.
\end{itemize}
The notation introduced here for partitions in $\mcO$ is illustrated in Figure~\ref{fig:notation}.

\begin{figure}
\centering
\includegraphics[width=0.4\linewidth]{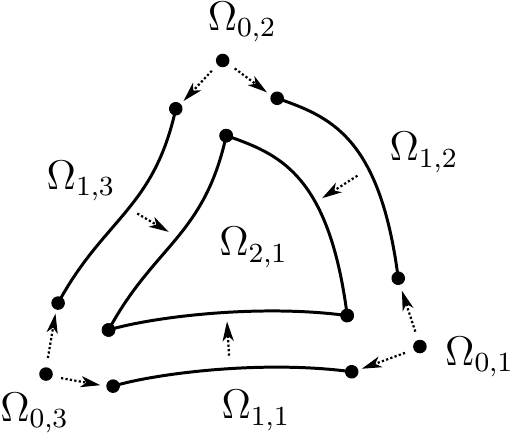}
\caption{Illustration of notation used for components of different dimension in a fractured domain in $\IR^n$, $n=2$, where a $d<n$ dimensional component always resides on the boundary of a $d+1$ dimensional component.}
\label{fig:notation}
\end{figure}

\paragraph{Function Spaces on $\mcO$ and $\partial \mcO$.} 
\begin{itemize}
\item Let $H^s(\Omega_{d,i})$ be the Sobolev space on the manifold $\Omega_{d,i} 
\in \mcO$ of order $s$ with scalar product $(v,w)_{H^s(\Omega_{d,i})}$, 
and define  
\begin{equation}
H^s(\mcO_{d}) = \bigoplus_{i=1}^{n_d} H^s(\Omega_{d,i}) ,
\qquad 
H^s(\mcO) = \bigoplus_{d=0}^n H^s(\Omega_d)
\end{equation} 
with scalar products 
\begin{equation}
(v_d,w_d)_{H^s(\mcO_d)} = \sum_{i=1}^{n_d} (v,w)_{H^s(\Omega_{d,i})}, 
\qquad
(v,w)_{H^s(\mcO)} = \sum_{d=0}^n (v_d,w_d)_{H^s(\Omega_d)}
\end{equation}
and inner product norms $\| v_d \|_{H^s(\mcO_d)}$ and $\|v\|_{H^s(\mcO)}$.  For 
$d=0$, $H^s(\Omega_{0,i}) = \IR$ and is equipped with the usual absolute value  
$\| v \|^2_{H^s(\Omega_{0,i})} = v^2$.

\item In the case $s=0$ we use the notation $L^2(\mcO_d)=H^0(\mcO_d)$ 
and $L^2(\mcO)=H^0(\mcO)$ with scalar products $(v_d,w_d)_{\mcO_d}$ 
and $(v,w)_\mcO$ and norms $\|v\|_{\mcO_d}$ and $\| v \|_\mcO$

\item On $\partial \mcO$ we define 
$L^2(\partial \mcO) = \bigoplus_{d=1}^n \bigoplus_{i=1}^{n_d} L^2(\partial \Omega_{d,i})$ and 
we equip the components in $\partial \mcO_d$ with the natural $d-1$ dimensional 
measure and thus all components of dimension less or equal to $d-2$ has measure 
zero which means that 
\begin{equation}
(v,w)_{\partial \mcO_d} = \sum_{i=1}^{n_d} (v,w)_{\partial \Omega_{d,i}} = 
\sum_{i=1}^{n_d} (v,w)_{\partial \Omega_{d,i}\cap \Omega_{d-1}}
\end{equation}

\end{itemize}
\paragraph{Tangential and Normal Vector Fields.}
\begin{itemize}
\item We say that $a= \oplus_{d=0}^n a_{d}$ is a tangential vector field on $\mcO$ 
if $a_d = \oplus_{i=1}^{n_d} a_{d,i}$  and each $a_{d,i}$ is a tangential vector field 
on the manifold $\Omega_{d,i} \in \mcO_d$.

\item We define the unit exterior normal vector field $\nu$ on $\partial \mcO$ 
by $\nu|_{\partial \Omega_{d,i}} = \nu_{d,i}$, where $\nu_{d,i}$ is the unit 
tangential vector field on $\Omega_{d,i}$ which is orthogonal to $\partial \Omega_{d,i}$ 
and exterior to $\Omega_{d,i}$, see Figure~\ref{fig:normals}.

\item The pointwise dot product $a \cdot b$ of two tangential vector fields $a$ 
and $b$ on $\mcO$ is the scalar field $(a\cdot b)_d = a_d \cdot b_d$ on each 
$\mcO_d$, $d=0,\dots,n$.
\end{itemize}
\paragraph{Tangential Gradient.}
\begin{itemize}
\item For $\delta>0$ let $U^n_\delta(\Omega_{d,i}) = \cup_{x\in \Omega_{d,i}} B_\delta(x) \subset \IR^n$, where $B_\delta(x)$ is the open ball of radius $\delta$ withe center $x$, be an open neighborhood of $\Omega_{d,i}$. Then there is a continuous extension operator 
$E:v \in H^s(\Omega_{d,i}) \rightarrow H^s(U^n_\delta(\Omega_{d,i}))$, see 
\cite{BuHaLaLaMa15} for the construction necessary to handle the fact that 
$\Omega_{d,i}$ has a boundary. We employ the 
shorthand notation $Ev = v^e$ when necessary for clarity otherwise we simplify 
further and write $v = v^e$.
 
\item Let $\nabla_d$ be the tangential gradient on $\Omega_d$ and 
\begin{equation}
\nabla v = \oplus_{d=1}^{n} \nabla_d v_d
\end{equation}
where for each $x \in \Omega_{d,i}$, $(\nabla_d v)|_x = (P_d \nabla_{\IR^n} v^e)|_x $ 
and $P_d|_x: \IR^n \rightarrow T_x(\Omega_{d,i})$ is the  projection onto the tangent plane $T_x(\Omega_{d,i})$. 

\item Given a tangential vector field $\beta$ let 
\begin{equation}
V_{\beta} = \{ v \in L^2(\mcO) : \|\beta \cdot \nabla v\|_{\mcO} \lesssim 1 \} 
\end{equation}
In other words we for $v\in V_{\beta}$ in  each component $\Omega_{d,i}\in\mcO$ have that $v|_{\Omega_{d,i}} \in L^2(\Omega_{d,i})$ and $v|_{\Omega_{d,i}} \in H^1(\omega)$ where $\omega$ is any $d-1$ dimensional manifold $\omega$ tangential to $\beta|_{\Omega_{d,i}}$.
\end{itemize}

\begin{figure}
\centering
\begin{subfigure}[t]{.4\linewidth}\centering
\includegraphics[width=0.87\linewidth]{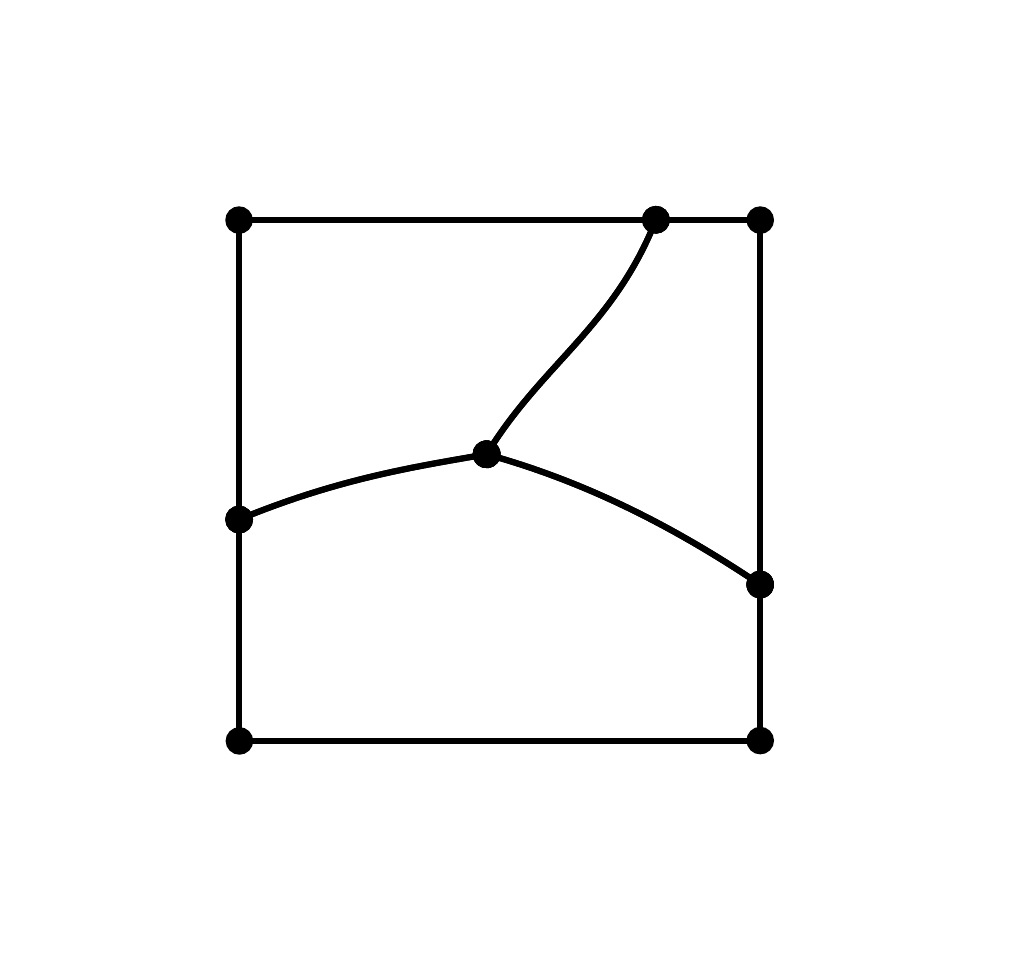}
\caption{Fractured domain} \label{fig:domain-ex-a}
\end{subfigure}
\begin{subfigure}[t]{.4\linewidth}\centering
\includegraphics[width=0.87\linewidth]{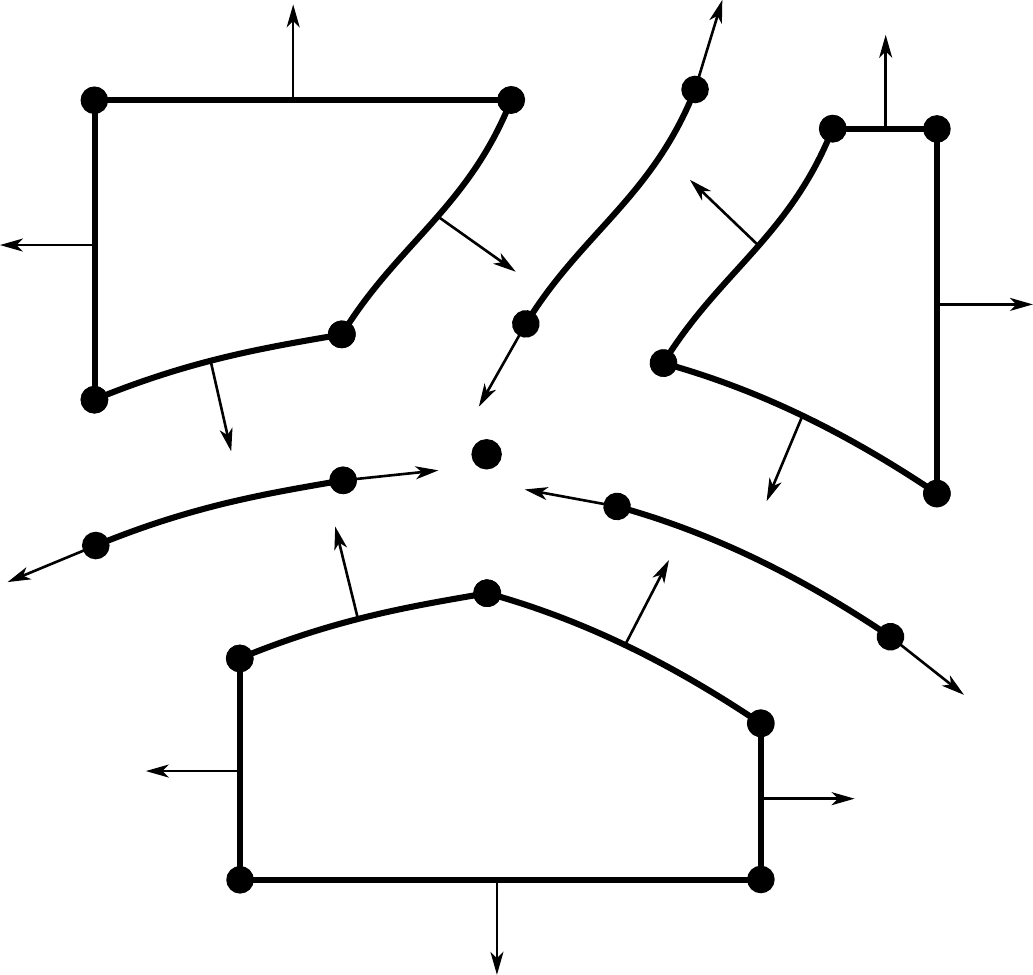}
\caption{Exterior unit normal field} \label{fig:domain-ex-b}
\end{subfigure}
\caption{Illustration of the exterior unit normal vector field on a fractured domain. (a) A fractured domain with $n=2$, $n_0=8$, $n_1=10$, and $n_2=3$. (b) The exterior unit normal field for the domain in (a).}
\label{fig:normals}
\end{figure}

\subsection{Abstract Differential Operators}

In this section we introduce jump operators used for coupling between different subdomains and
also differential operators that enable formulation of the convection problem in a compact form.

\paragraph{Jump Operators.} To express the coupling between subdomains we use the following operators:
\begin{itemize}
\item The jump operator $\llbracket \cdot\rrbracket_d:L^2(\partial \mcO_{d+1} ) 
\rightarrow L^2(\mcO_d)$ is defined by $\llbracket \cdot\rrbracket_n 
=  0$ and for $d=0,\dots,n-1$, 
\begin{equation}
\llbracket  v_{d+1} \rrbracket_{d}|_{\Omega_{d,i}} 
= \sum_{j=1}^{n_{d+1}} v_{d+1,j} |_{\partial \Omega_{d+1} \cap \Omega_{d,i}} 
\end{equation} 
We then have the identity 
\begin{equation}
(v_{d+1},w_d)_{\partial \mcO_{d+1}} = (\llbracket v_{d+1} \rrbracket_d, w_d)_{\mcO_d}
\qquad w_d \in L^2(\mcO_d)
\end{equation}
and we also note that 
\begin{equation}
\llbracket v_{d+1} w_d \rrbracket_d =  \llbracket v_{d+1}  \rrbracket_d w_d \qquad w_d \in L^2(\mcO_d)
\end{equation}

\item The jump operator $[\cdot ]_d: L^2(\mcO_{d-1}) \times L^2(\partial \mcO_{d}) 
\rightarrow L^2(\partial \mcO_{d})$ is defined by $[v]_0 = 0$ and for $d=1,\dots,n$, 
\begin{equation}
[v]_{d,i} |_{\partial \Omega_{d,i}} = v_{d,i}|_{\partial \Omega_{d,i}} - \sum_{j=1}^{n_{d-1}}v_{d-1,j}|_{\partial\Omega_{d,i}\cap\Omega_{d-1,j}} 
\end{equation}

\end{itemize}
Note that the jump operators provide the coupling between the different 
subdomains and that only neighboring subdomains with difference in dimension 
equal to one couple to each other.

%

\paragraph{The Directional Derivative and Divergence Operators.} 
Let $\beta$ be a smooth tangential vector field on $\mcO$, i.e. 
$(\beta)_{d,i}$ is a smooth tangential vector field on each $\Omega_{d,i} \in \mcO$,
 and let $\nu$ be the unit exterior normal vector field on $\partial \mcO$ defined 
 in Section \ref{sec:defs}.
\begin{itemize}
\item Let the derivative $D_\beta$ in the direction $\beta$ be defined by
\begin{equation}
( D_{\beta} v )_n = \beta_n \cdot \nabla_n v_n, 
\qquad 
(D_{\beta} v )_0 = \sum_{i=1}^{n_1} \nu_{1,i} \cdot \beta_{1,i} (v_{0} - v_{1,i})  
\end{equation}
and for $d=1,\dots,n-1,$ let
\begin{equation}
(D_{\beta} v)_d
= \beta_d \cdot \nabla_d v_d + \sum_{i=1}^{n_{d+1}} 
\nu_{d+1,i} \cdot \beta_{d+1,i} (v_{d} - v_{d+1,i})  
\end{equation}
or equivalently in terms of the jump operators
\begin{equation}
(D_\beta v)_d = \beta_d \cdot \nabla_d v_d - \llbracket \nu_{d+1} \cdot \beta_{d+1} [v]_{d+1}\rrbracket_{d}
\end{equation}
\item Let  the divergence $\Div \beta$ be defined by
\begin{equation}\label{eq:Div}
(\Div\beta)_d = \nabla_d \cdot \beta_d - \sum_{i=1}^{n_{d+1}} 
\nu_{d+1,i} \cdot \beta_{d+1,i}
\end{equation}
or equivalently in terms of the jump operators
\begin{equation}\label{eq:Div-jump}
(\Div\beta)_d = \nabla_d \cdot \beta_d 
- \llbracket \nu_{d+1} \cdot \beta_{d+1} \rrbracket_d
\end{equation}

\end{itemize}
%
%
%
In order to formulate a partial integration formula for $D_\beta$ we introduce the 
notation
\begin{equation}
\partial \mcO_B = \partial \mcO \cap \partial \Omega 
= 
\sqcup_{i,d} (\partial \Omega_{i,d} \cap \partial \Omega), 
\qquad  
\partial \mcO_I = \partial \mcO \setminus \partial \Omega 
= \sqcup_{i,d} (\partial \Omega_{i,d} \setminus \partial \Omega)
\end{equation}
to denote the components in $\partial \mcO$ which belong to the boundary and the interior respectively. We end this section by stating a lemma from \cite{BuHaLaLa18-b}.

\begin{lem}\label{lem:partial-integration} {\bf(Partial Integration)}
For  a smooth tangential vector field $\beta$ on $\mcO$ and $v\in V_\beta$,
\begin{equation}\label{eq:Div-formula}
\Div(\beta v ) = D_\beta v + (\Div\beta) v 
\end{equation}
and for $v,w\in V_\beta$,  
\begin{equation}\label{eq:Dbeta-partial-integration}
(D_\beta v, w)_\mcO =  - (v,D_\beta w)_{\mcO}  -((\Div\beta) v,w)_{\mcO} 
+ (\nu \cdot \beta [v],[w])_{\partial \mcO_I}
+ (\nu \cdot \beta v, w)_{\partial \mcO_B}
\end{equation}
where $\nu$ is the exterior unit normal vector field on $\partial \mcO$.
\end{lem}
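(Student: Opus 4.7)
The plan is to prove \eqref{eq:Div-formula} first by a direct pointwise computation on each component, and then derive \eqref{eq:Dbeta-partial-integration} by applying the classical divergence theorem on each smooth $\Omega_{d,i}$ and collapsing the resulting boundary contributions using the jump-operator identities collected earlier in this section.

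For \eqref{eq:Div-formula}, I fix $d$ and start from definition \eqref{eq:Div-jump} applied to the tangential field $\beta v$. The classical product rule on $\Omega_{d,i}$ gives $\nabla_d\cdot(\beta_d v_d) = (\nabla_d\cdot\beta_d)v_d + \beta_d\cdot\nabla_d v_d$. For the jump part I use that at any point of $\partial\Omega_{d+1,j}\cap\Omega_{d,i}$ one has $v_{d+1,j}=[v]_{d+1,j}+v_{d,i}$, together with the identity $\llbracket f_{d+1}g_d\rrbracket_d=\llbracket f_{d+1}\rrbracket_d\,g_d$, to split
\begin{equation*}
\llbracket\nu_{d+1}\cdot\beta_{d+1}\,v_{d+1}\rrbracket_d = \llbracket\nu_{d+1}\cdot\beta_{d+1}\,[v]_{d+1}\rrbracket_d + \llbracket\nu_{d+1}\cdot\beta_{d+1}\rrbracket_d\,v_d.
\end{equation*}
Matching these two expansions against the definitions of $D_\beta v$ and $\Div\beta$ gives $(\Div(\beta v))_d = (D_\beta v)_d + (\Div\beta)_d v_d$.

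For \eqref{eq:Dbeta-partial-integration}, I apply the classical divergence theorem to the tangential vector field $\beta_d v_d w_d$ on each $\Omega_{d,i}$, expand with the Euclidean product rule, and sum over $i,d$ to obtain
\begin{equation*}
\sum_d\bigl[(\beta\cdot\nabla v,w)_{\mcO_d}+(v,\beta\cdot\nabla w)_{\mcO_d}+((\nabla_d\cdot\beta_d)v,w)_{\mcO_d}\bigr] = \sum_{d,i}\int_{\partial\Omega_{d,i}}\nu_{d,i}\cdot\beta_{d,i}\,v_d w_d.
\end{equation*}
Using the relations $\beta_d\cdot\nabla_d v_d=(D_\beta v)_d+\llbracket\nu_{d+1}\cdot\beta_{d+1}[v]_{d+1}\rrbracket_d$ and $(\nabla_d\cdot\beta_d)v_d=(\Div\beta)_d v_d+\llbracket\nu_{d+1}\cdot\beta_{d+1}\rrbracket_d v_d$ (and the analogue for $w$), the left-hand side becomes $(D_\beta v,w)_\mcO+(v,D_\beta w)_\mcO+((\Div\beta)v,w)_\mcO$ plus three jump inner products on the $\mcO_d$. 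The duality $(\llbracket f_{d+1}\rrbracket_d,g_d)_{\mcO_d}=(f_{d+1},g_d)_{\partial\mcO_{d+1}}$ transfers these to boundary integrals on $\partial\mcO_{d+1}$, which I then subtract from the right-hand side.

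It remains to analyze the combined boundary expression. I split $\partial\mcO=\partial\mcO_I\sqcup\partial\mcO_B$. On $\partial\mcO_B$ no lower-dimensional component is present, so the three jump corrections vanish and only the classical Stokes contribution $(\nu\cdot\beta v,w)_{\partial\mcO_B}$ survives. On $\partial\mcO_I$, per face $\partial\Omega_{d',i}\cap\Omega_{d'-1}$ (with $d'=d+1$), the surviving integrand is
\begin{equation*}
\nu\cdot\beta\,\bigl[v_{d',i}w_{d',i}-[v]_{d'}\,w_{d'-1}-v_{d'-1}\,[w]_{d'}-v_{d'-1}w_{d'-1}\bigr],
\end{equation*}
where $v_{d'-1},w_{d'-1}$ denote the lower-dimensional traces at the point. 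The key algebraic identity
\begin{equation*}
v_{d',i}w_{d',i}=(v_{d'-1}+[v]_{d'})(w_{d'-1}+[w]_{d'})=[v]_{d'}[w]_{d'}+[v]_{d'}w_{d'-1}+v_{d'-1}[w]_{d'}+v_{d'-1}w_{d'-1}
\end{equation*}
then collapses the bracket to $[v]_{d'}[w]_{d'}$, and summing over $d'$ produces $(\nu\cdot\beta[v],[w])_{\partial\mcO_I}$. Rearranging yields \eqref{eq:Dbeta-partial-integration}. The main obstacle is precisely this final bookkeeping: one must reindex the nested sums across dimensions, correctly separate $\partial\mcO_B$ from $\partial\mcO_I$ (the subtracted jump terms only find a partner trace on the latter), and recognize that the four apparently independent boundary contributions are nothing but the four terms of the binomial expansion of $v_{d',i}w_{d',i}$ in powers of the jump.
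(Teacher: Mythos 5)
Your argument is correct, but note that the paper itself does not prove this lemma at all: it is imported verbatim from the reference \cite{BuHaLaLa18-b} ("We end this section by stating a lemma from..."), so there is no in-paper proof to compare against. What you have written is the natural self-contained derivation, and it checks out. For \eqref{eq:Div-formula}, your splitting $\llbracket\nu_{d+1}\cdot\beta_{d+1}v_{d+1}\rrbracket_d=\llbracket\nu_{d+1}\cdot\beta_{d+1}[v]_{d+1}\rrbracket_d+\llbracket\nu_{d+1}\cdot\beta_{d+1}\rrbracket_d v_d$ is exactly the "add and subtract $\llbracket\nu_{d+1}\cdot\beta_{d+1}u_d\rrbracket$" manipulation the authors themselves perform when passing from the componentwise to the abstract formulation, so it is fully consistent with their conventions. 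For \eqref{eq:Dbeta-partial-integration}, the componentwise divergence theorem, the duality $(\llbracket f_{d+1}\rrbracket_d,g_d)_{\mcO_d}=(f_{d+1},g_d)_{\partial\mcO_{d+1}}$, and the binomial collapse of the four boundary terms to $[v]_{d'}[w]_{d'}$ on $\partial\mcO_I$ are all sound. Two small points you should make explicit if you write this up in full: (i) the surface divergence theorem $\int_{\Omega_{d,i}}\nabla_d\cdot X=\int_{\partial\Omega_{d,i}}\nu\cdot X$ is valid here only because $X=\beta_d v_d w_d$ is \emph{tangential}, so the mean-curvature term $\int H\,(X\cdot n)$ drops out; and (ii) the assertion that the jump corrections find no partner trace on $\partial\mcO_B$ is an interpretation of the geometric set-up (components meeting $\partial\Omega$ are not coupled to a lower-dimensional member of $\mcO$ there) rather than something the paper states formally — it is the intended reading, consistent with the boundary condition \eqref{eq:strong-form-inflow-boundary}, but worth flagging. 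A density argument is also implicitly needed to justify the product rule and traces for $v,w\in V_\beta$ rather than smooth functions.
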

\subsection{The Model Problem}

In this section we introduce our model convection problem on a fractured domain.

\paragraph{Componentwise Formulation.}
Find $u_{d,i}:\Omega_{d,i} \rightarrow \IR$ such that
\begin{alignat}{3}\label{eq:strong-form-bulk}
\nabla_{d,i}\cdot (\beta_{d,i} u_{d,i} )  + \alpha_{d,i} u_{d,i}
- \llbracket \nu_{d+1} \cdot \beta_{d+1} u_{d+1}\rrbracket_{d,i} 
&= f_{d,i}& \qquad &\text{in $\Omega_{d,i}$}
\\ \label{eq:strong-form-inflow-internal}
(\nu_{d,i} \cdot \beta_{d,i})_- [ u ]_{d,i} &= 0 & \qquad &\text{on $\partial \Omega_{d,i} \setminus \partial \Omega$}
\\ \label{eq:strong-form-inflow-boundary}
(\nu_{d,i} \cdot \beta_{d,i})_-  (u_{d,i} - g_{d,i}) &=0 & \qquad  &\text{on 
$ \partial \Omega_{d,i} \cap \partial \Omega$}
\end{alignat}
where 
$(v)_- = \min(v,0)$ denotes the negative part of 
$v$.

\paragraph{Abstract Formulation.} 
We note that using the definition (\ref{eq:Div}) of the divergence we may rewrite 
(\ref{eq:strong-form-bulk}) as follows
\begin{align}\nonumber
&\nabla_{d,i}\cdot (\beta_{d,i} u_{d,i} )  + \alpha_{d,i} u_{d,i}
- \llbracket \nu_{d+1} \cdot \beta_{d+1} u_{d+1}\rrbracket_{d,i} 
\\
&\qquad =
\beta_{d,i} \cdot \nabla_{d,i} u_{d,i} 
+ (\nabla_{d,i}\cdot \beta_{d,i} )  u_{d,i} + \alpha_{d,i} u_{d,i}
\\ \nonumber
&\qquad \qquad - \llbracket \nu_{d+1} \cdot \beta_{d+1} (u_{d+1} - u_d) \rrbracket_{d,i} 
- \llbracket \nu_{d+1} \cdot \beta_{d+1} u_d \rrbracket_{d,i} 
\\
&\qquad =
\beta_{d,i} \cdot \nabla_{d,i} u_{d,i} - \llbracket \nu_{d+1} \cdot \beta_{d+1} [u]_{d+1} \rrbracket_{d,i} 
\\ \nonumber
&\qquad \qquad + (\nabla_{d,i}\cdot \beta_{d,i} )  u_{d,i} - \llbracket \nu_{d+1} \cdot \beta_{d+1}  \rrbracket_{d,i} u_d
+ \alpha_{d,i} u_{d,i}
\\
&\qquad = (D_\beta u + \Div \beta + \alpha )_{d,i}
\end{align}
where we essentially added and subtracted $\llbracket \nu_{d+1} \cdot \beta_{d+1} u_d \rrbracket$ 
and rearranged the terms. Thus in terms of the abstract operators (\ref{eq:strong-form-bulk}) 
takes the form
\begin{equation}
D_\beta u  + (\alpha + \Div\beta )u = f
\end{equation}
Thus we obtain the problem: find $u \in V$ such that 
\begin{alignat}{3}\label{eq:problem-a}
D_\beta u + \gamma u &=f &   \qquad &\text{in $\mcO$}
\\ \label{eq:problem-b}
(\nu \cdot \beta)_- [ u ] &= 0 & \qquad &\text{on $\partial \mcO_I$}
\\ \label{eq:problem-c}
(\nu \cdot \beta)_- ( u - g) &= 0 & \qquad &\text{on $\partial \mcO_B$}
\end{alignat}
where $\gamma = \alpha + \Div \beta$ or in component form
\begin{equation}
\gamma_d = \alpha_d + \nabla_d \cdot \beta_d -  \llbracket \nu_{d+1} \cdot \beta_{d+1}   \rrbracket_{d}
\end{equation}

\paragraph{Weak Formulation.} Find $u \in V_\beta$ such that 
\begin{equation}\label{eq:weak-problem}
a(u,v)  = l(v) \qquad \forall v \in V_\beta
\end{equation}
where the forms are defined by
\begin{align}
a(v,w) &= (D_\beta v,w)_\mcO + (\gamma v,w)_\mcO 
+ (|\nu \cdot \beta|_- [ v ],[w])_{\partial \mcO_I}
+ (|\nu \cdot \beta|_- v , w)_{\partial \mcO_B}
\\
l(w) &= (f,w)_\mcO +  (|\nu \cdot \beta|_- g, w)_{\partial \mcO_B}
\end{align}
and we used the simplified notation $| v |_- = |(v)_-|$, for the absolute value of the 
negative part. Using Lemma \ref{lem:partial-integration} we may derive the following stability 
result. 

\begin{lem}\label{lem:coercivity}{\bf (Coercivity)}
If there is a constant $c_0>0$ such that 
\begin{equation}\label{eq:assumption-coeff}
c_0 \leq \| 2 \alpha + \Div \beta \|_{L^\infty(\Gamma)} 
\end{equation}
then 
\begin{equation}\label{eq:coercivity}
\| v \|^2_{\mcO} 
+ \|[v]\|^2_{|\nu \cdot \beta|,\partial \mcO_I} 
+ \| v \|^2_{|\nu \cdot \beta|,\partial \mcO_B} 
\lesssim
a(v,v)
\qquad \forall v\in V_\beta
\end{equation}
where we introduced the norms
\begin{equation}
\| w \|^2_{|\nu \cdot \beta|,\partial \mcO_J} =  \left\| ( |\nu \cdot \beta|_{-} )^{1/2} w \right\|^2_{\partial \mcO_J} ,\quad J\in\{I,B\}
\end{equation}
\end{lem}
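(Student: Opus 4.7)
The plan is to test the bilinear form against $w=v$ and apply the partial integration identity from Lemma~\ref{lem:partial-integration} to rewrite $(D_\beta v,v)_\mcO$ in symmetric form. First I would set $w=v$ in \eqref{eq:Dbeta-partial-integration}; since the two copies of $(D_\beta v,v)_\mcO$ combine, this gives
\begin{equation*}
(D_\beta v,v)_\mcO = -\tfrac{1}{2}((\Div\beta)v,v)_\mcO + \tfrac{1}{2}(\nu\cdot\beta\,[v],[v])_{\partial\mcO_I} + \tfrac{1}{2}(\nu\cdot\beta\,v,v)_{\partial\mcO_B}.
\end{equation*}
Adding the reaction contribution $(\gamma v,v)_\mcO = ((\alpha+\Div\beta)v,v)_\mcO$ then produces the bulk coefficient $\tfrac{1}{2}(2\alpha+\Div\beta)$ together with one half of each signed boundary flux.

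Next I would combine these half-fluxes with the stabilization terms already present in $a(v,v)$ by invoking the elementary pointwise identity $\tfrac{1}{2}x + |x|_- = \tfrac{1}{2}|x|$, valid for every real $x$ (verified by separating the cases $x\geq 0$ and $x<0$). Applying it with $x=\nu\cdot\beta$ yields the clean expression
\begin{equation*}
a(v,v) = \tfrac{1}{2}((2\alpha+\Div\beta)v,v)_\mcO + \tfrac{1}{2}(|\nu\cdot\beta|\,[v],[v])_{\partial\mcO_I} + \tfrac{1}{2}(|\nu\cdot\beta|\,v,v)_{\partial\mcO_B}.
\end{equation*}

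Finally I would use assumption \eqref{eq:assumption-coeff} to obtain $((2\alpha+\Div\beta)v,v)_\mcO \gtrsim c_0\|v\|^2_\mcO$, and observe that $|\nu\cdot\beta| \geq |\nu\cdot\beta|_-$ pointwise, so the two boundary contributions dominate the stabilization norms $\|\cdot\|_{|\nu\cdot\beta|,\partial\mcO_I}$ and $\|\cdot\|_{|\nu\cdot\beta|,\partial\mcO_B}$ defined in the statement. Summing the three lower bounds yields the claimed estimate with hidden constant $\tfrac{1}{2}\min(c_0,1)$.

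The main obstacle is the careful bookkeeping around signs: one must recognise that the stabilization coefficient $|\nu\cdot\beta|_-$ is precisely tuned to absorb the negative half of the flux produced by partial integration and to promote the remaining positive half into a full absolute value at the boundary. The factor $2$ in assumption \eqref{eq:assumption-coeff} appears for the same reason, since partial integration transfers only one half of $\Div\beta$ into the bulk sign; a weaker hypothesis on $\alpha + \tfrac{1}{2}\Div\beta$ alone would not suffice without additional structural control.
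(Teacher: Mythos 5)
Your proof is correct and follows exactly the route the paper intends (the paper omits the details, noting only that the result follows from Lemma~\ref{lem:partial-integration}): symmetrize $(D_\beta v,v)_\mcO$ by partial integration to get $-\tfrac12((\Div\beta)v,v)_\mcO$ plus half of each signed flux, absorb those half-fluxes into the penalty terms via the pointwise identity $\tfrac12 x+|x|_-=\tfrac12|x|$, and conclude with the coefficient assumption and $|\nu\cdot\beta|\ge|\nu\cdot\beta|_-$. The only caveat is that you (rightly) read \eqref{eq:assumption-coeff} as the pointwise lower bound $2\alpha+\Div\beta\ge c_0$ a.e., which is clearly what is meant despite the $L^\infty$-norm notation; note also that your closing remark is slightly off, since a hypothesis on $\alpha+\tfrac12\Div\beta$ is equivalent to one on $2\alpha+\Div\beta$.
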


\section{The Cut Finite Element Method}
\subsection{The Mesh and Finite Element Spaces}
\begin{itemize}
\item Let $\Omega_0 \in \IR^n$ be a polygonal domain such that $\Omega\subset \Omega_0$ 
and let $\{ \mcT_{h,0},\, h\in (0,h_0] \}$ for some constant $h_0 > 0$ be a family of quasi-uniform 
meshes with mesh parameter $h$ of $\Omega_0$ om shape regular elements $T$.

\item Let $V_{h,0}$ be a finite element space of continuous piecewise polynomial functions on $\mcT_{h,0}$. We consider low order elements with linear polynomials or tensor product polynomials. Adaption to higher order elements is outlined in the next section.

\item For each $\Omega_{d,i} \in \mcO$ let the active mesh be defined by
\begin{equation}\label{eq:mesh-active}
\mcT_{h,d,i} = \{ T \in \mcT_{h,0} : T\cap \Omega_{d,i} \neq \emptyset \} 
\end{equation}
and define the associated finite element space $V_{h,d,i} = V_{h,0} |_{\Omega_{d,i}}$, see Figure~\ref{fig:meshes}. Note that in most cases it is not necessary to introduce active meshes on components without a source term that constitute part of the boundary. This is due to the solution in those parts being directly given by either the boundary condition or the coupling to a higher dimensional component. For simplicity, we therefore from this point on assume all components $\Omega_{d,i} \in \mcO$ satisfy $\Omega_{d,i} \cap \partial\mcO_B = \emptyset$.

\item Define the finite element space on $\mcO$ as the direct sum
\begin{equation}
V_h = \bigoplus_{d=0}^n V_{h,d}, \qquad V_{h,d} = \bigoplus_{i=1}^{n_d} V_{h,d,i}
\end{equation} 
\end{itemize}

\begin{figure}
\centering
\begin{subfigure}[t]{.31\linewidth}\centering
\includegraphics[width=0.9\linewidth]{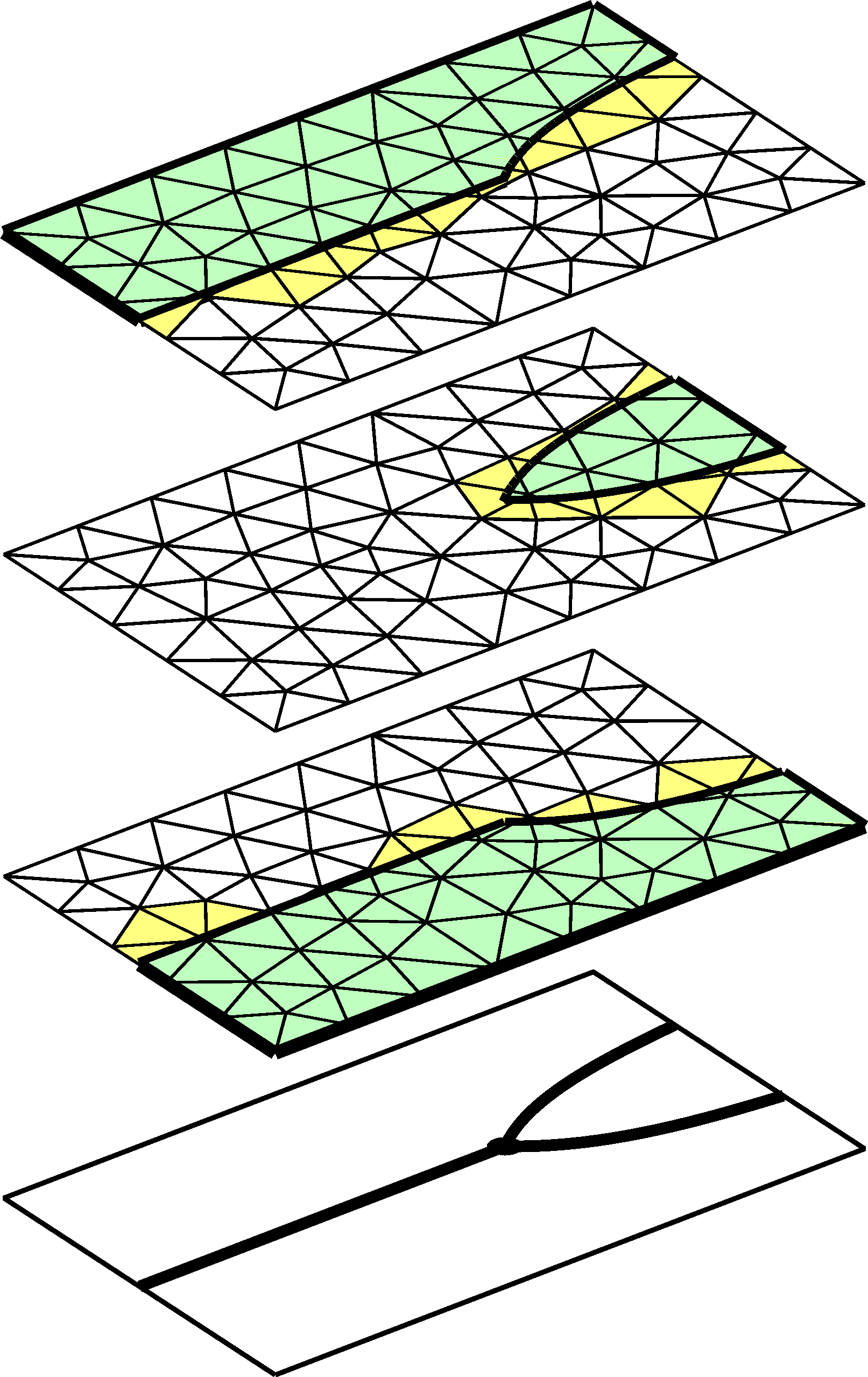}
\caption{$d=2$} \label{fig:meshes-codim0}
\end{subfigure}
\begin{subfigure}[t]{.31\linewidth}\centering
\includegraphics[width=0.9\linewidth]{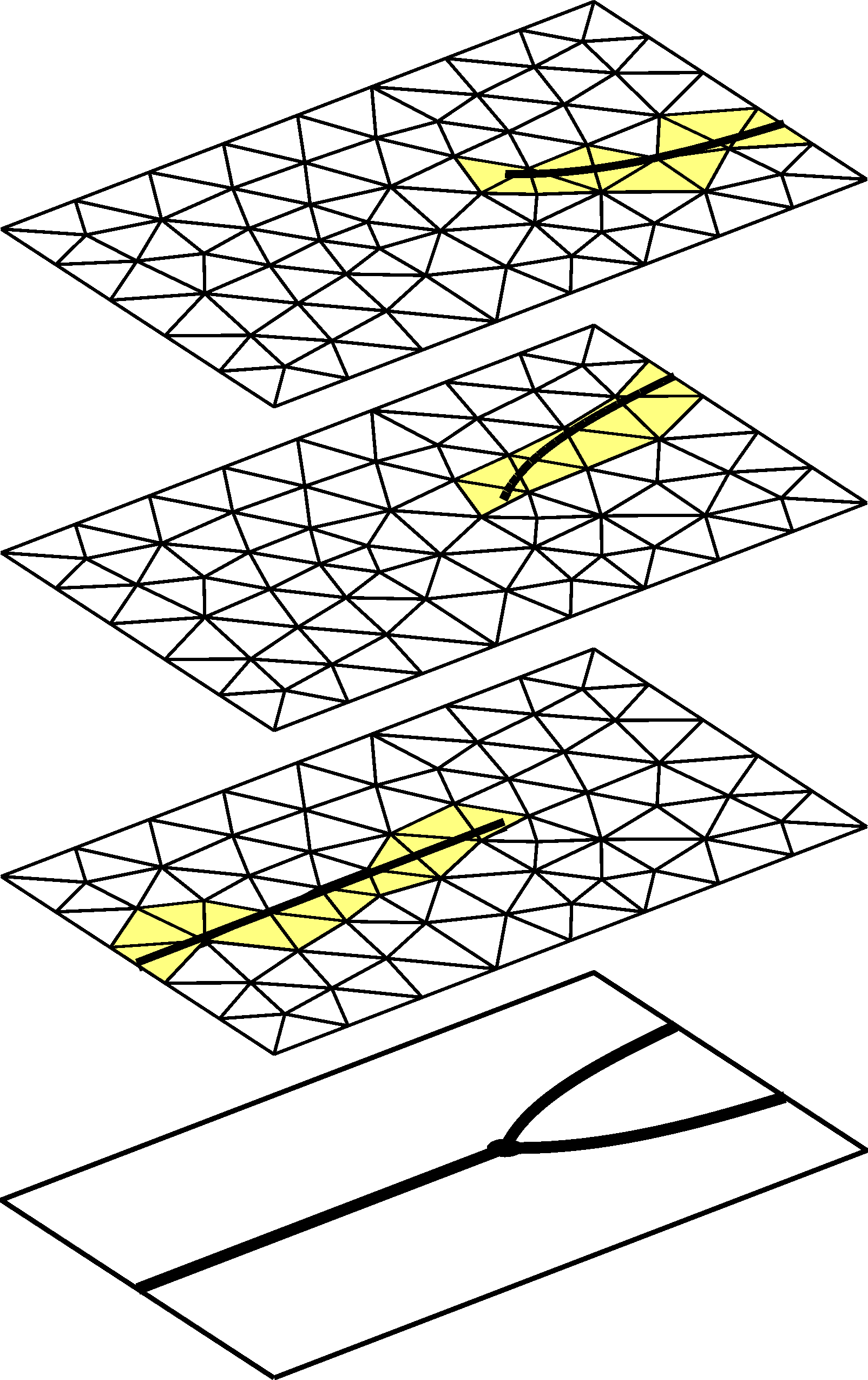}
\caption{$d=1$}\label{fig:meshes-codim1}
\end{subfigure}
\begin{subfigure}[t]{.31\linewidth}\centering
\includegraphics[width=0.9\linewidth]{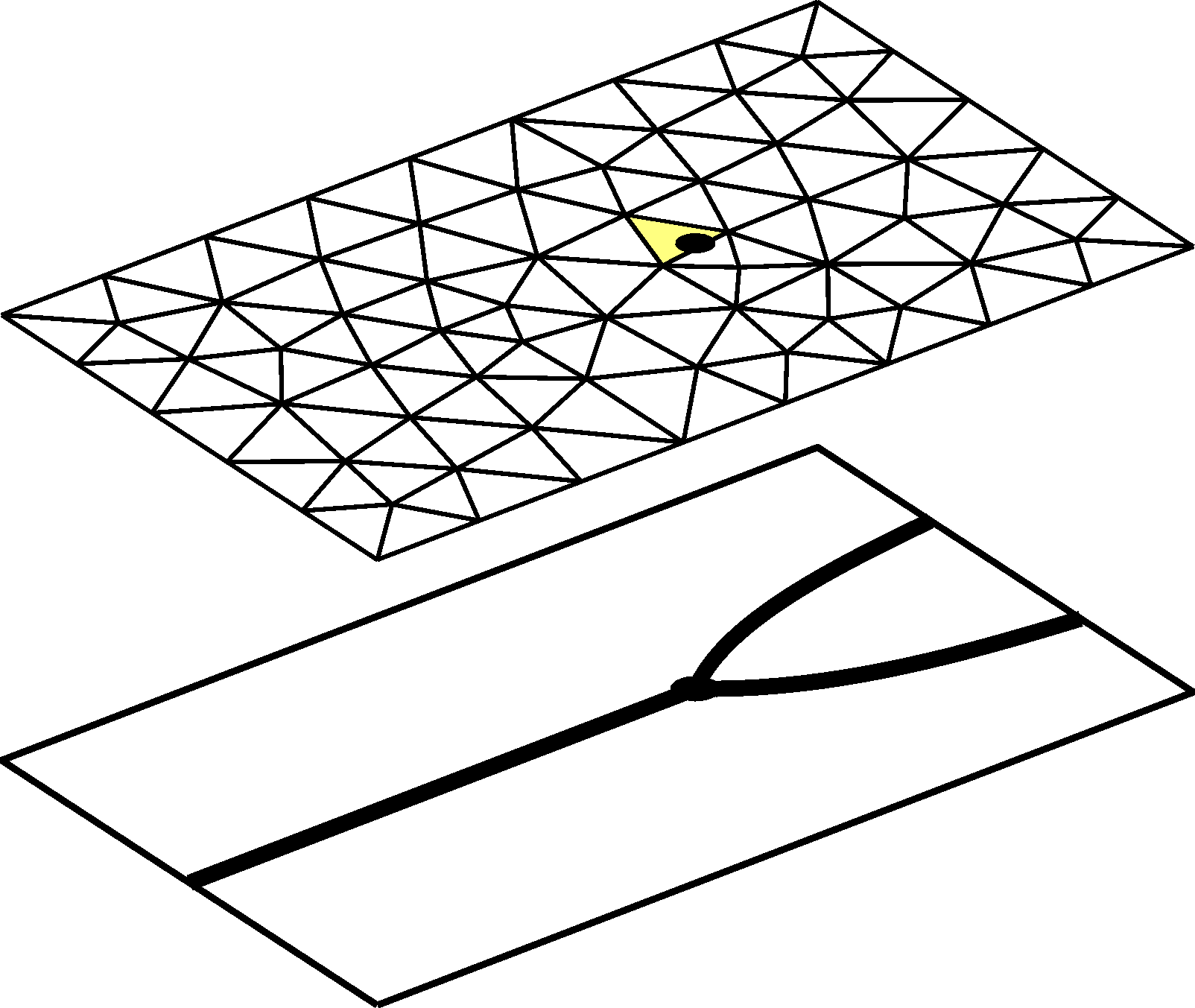}
\caption{$d=0$}\label{fig:meshes-codim2}
\end{subfigure}
\caption{Meshes for an example geometry in 2D consisting of three bulk domains ($d=2$), three cracks ($d=1$), and one bifurcation point ($d=0$). The colored parts are the active meshes $\{\mcT_{h,d,i}\}$.}
\label{fig:meshes}
\end{figure}

\subsection{The Method} 
We consider a finite element method based on the weak formulation (\ref{eq:weak-problem}) which takes care of the coupling between the different domains. Using a conforming 
finite element space we will need to stabilize 
the convection term and furthermore since we are using a cut finite element method 
we need to stabilize in order to control the variation of the solution orthogonal to 
$\Omega_{d,i}$. For simplicity, we will consider piecewise linear elements and use  
standard Galerkin Least Squares (GLS) method together with so called full gradient 
stabilization for the cut elements developed in \cite{BuHaLaMaZa16}.  The full gradient 
stabilization adds control of the variation of the finite element solution in the direction 
orthogonal to the manifold $\Omega_{d,i}$ and also provides control of the resulting 
condition number of the linear system of equations. The full gradient stabilization is not 
consistent and we scale it in such a way that we do not lose order of convergence. 
Essentially, for linear elements we obtain an artificial tangent diffusion of order $h^{3/2}$. 
In the case of higher order elements we may use a weaker full gradient stabilization or 
preferably  a more refined stabilization which is consistent (on exact geometry) such 
as the recently developed normal stabilization, \cite{BurHanLarMas16} and \cite{GraLehReu16}, or 
the combined normal-face stabilization \cite{LaZa17}.

\paragraph{Galerkin Least Squares (GLS).}
Find $u_h \in V_h$ such that 
\begin{equation}\label{eq:fem}
a_h(u_h,v) = l_h(v)\qquad \forall v \in V_h
\end{equation}
where 
\begin{align}
a_h(v,w) &= \sum_{d=0}^n \sum_{i=1}^{n_d} a_{h,d,i}(v_{d,i},w_{d,i})
+ (|\nu\cdot \beta|_- [v],[w])_{\partial \mcO_I} +  (|\nu\cdot \beta|_- v,w)_{\partial \mcO_B} 
\\
l_h(v) &= \sum_{d=0}^n  \sum_{i=1}^{n_d} l_{h,d,i}(v_{d,i})
\end{align}
The forms $a_{h,d,i}$ and $l_{d,h,i}$ are linear forms on $V_{h,d,i}$ defined by
\begin{align}
a_{h,d,i}(v,w) 
&= 
(L_d v, w)_{\Omega_{d,i}} + (\tau_1 h L_d v, L_d w )_{\Omega_{d,i}} 
+ s_{h,d,i}(v,w)
\\
l_{h,d,i}(v) &= (f_{d,i}, v_{d,i})_{\Omega_{d,i}} 
+ (\tau_1 h f_{d,i}, L_d v)_{\Omega_{d,i}} 
+ (|\nu\cdot \beta|_{-} g,v)_{\partial \Omega_{d,i}} 
\end{align}
where $\tau_1>0$ is a parameter 
\begin{align}
L_{d,i} v &= (D_\beta v + \gamma v)|_{d,i}
\\
&=\beta_{d,i}\cdot \nabla_{d,i} v_{d,i}  
+ ( (\nabla_{d,i} \cdot (\beta_{d,i} ) + \alpha_{d,i}) v_{d,i}
- \llbracket \nu_{d+1} \cdot \beta_{d+1} v_{d+1}\rrbracket_{d,i} 
\end{align}
and  $s_{h,d,i}$ is the stabilization form
\begin{equation}
s_{h,d,i}(v,w) = \tau_2 h^{3 - (n-d)} (\nabla_{\IR^n} v, \nabla_{\IR^n} w )_{\mcT_{h,d,i}}
\end{equation}
where $\tau_2$ is a parameter and $\nabla_{\IR^n}$ denotes the gradient in $\IR^n$. 
We also note that $n-d$ is the codimension of $\Omega_{d,i}$ and thus the scaling 
factor $h^{-(n-d)}$ compensates for the fact that we integrate over the $n$ dimensional 
set $\mcT_{h,d,i}$. We will see that the additional $h^3$ scaling ensures that we do not 
lose  order of convergence when adding $s_{h}$.

%
%
%


\section{Error Estimates} 

We prove a basic error estimate in the natural energy norm associated with 
the GLS method. We assume that the geometry is represented exactly and 
that all integrals are computed exactly. In this situation the proof is done using 
the standard techniques combined with an interpolation error estimate for 
manifolds of arbitrary codimension. Estimates of the geometric error can be 
done using a generalization of the approach developed in \cite{BurHanLarMas16}. 

\subsection{Coercivity and Continuity}
Let 
\begin{equation}
V^e = \{v^e = Ev \,:\, v \in V_\beta \} \,, \qquad W = V^e + V_h
\end{equation}
where $E$ is the extension operator defined in Section~\ref{sec:defs} when introducing the tangential gradient.
Define the energy norm 
\begin{equation}\label{eq:energy-norm}
\tn v \tn^2_h = \| v \|^2_\mcO + h \| L v \|^2_\mcO + \| v \|^2_{s_h} 
+ \|[v]\|^2_{|\nu \cdot \beta|,\partial \mcO_I} 
+ \| v \|^2_{|\nu \cdot \beta|,\partial \mcO_B}, 
\qquad v \in W 
\end{equation}
and the norm
\begin{equation}\label{eq:energy-norm-cont}
\tn v \tn^2_{h,*} = h^{-1} \| v \|^2_\mcO+\tn v \tn^2_h, 
\qquad v \in W
\end{equation}
which we will need in the statement of continuity.

\begin{lem} 
The form $a_h$ is continuous
\begin{equation}\label{eq:Ah-continuity}
a_h(v, w) \lesssim \tn v \tn_{h,*} \tn w \tn_h, \qquad v,w \in W
\end{equation}
and if (\ref{eq:assumption-coeff}) holds coercive 
\begin{equation}\label{eq:Ah-coercivity}
\tn v \tn_h^2 \lesssim a_h(v,v)\qquad v,w \in W
\end{equation}
\end{lem}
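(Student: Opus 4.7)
The strategy is standard for GLS-stabilized transport: coercivity comes from testing with $v$ and invoking the partial-integration lemma to convert $(D_\beta v,v)_\mcO$ into a symmetric reaction term plus boundary/interface contributions of the right sign, while continuity is Cauchy--Schwarz on each summand, with the asymmetric norms $\tn\cdot\tn_h$ and $\tn\cdot\tn_{h,*}$ absorbing the powers of $h$ coming from the convective term.

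For coercivity I would set $w=v$ in $a_h$, giving
\[
a_h(v,v) = (Lv,v)_\mcO + \tau_1 h \|Lv\|^2_\mcO + \|v\|^2_{s_h} + (|\nu\cdot\beta|_-[v],[v])_{\partial\mcO_I} + (|\nu\cdot\beta|_- v,v)_{\partial\mcO_B},
\]
split $(Lv,v)_\mcO = (D_\beta v,v)_\mcO + (\gamma v,v)_\mcO$, and apply Lemma~\ref{lem:partial-integration} with $w=v$ to obtain
\[
(D_\beta v,v)_\mcO = -\tfrac12(\Div\beta\,v,v)_\mcO + \tfrac12(\nu\cdot\beta[v],[v])_{\partial\mcO_I} + \tfrac12(\nu\cdot\beta\,v,v)_{\partial\mcO_B}.
\]
Using $\gamma = \alpha+\Div\beta$, the bulk contribution becomes $\tfrac12((2\alpha+\Div\beta)v,v)_\mcO$, which by (\ref{eq:assumption-coeff}) dominates $\tfrac{c_0}{2}\|v\|^2_\mcO$. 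The boundary and interface pieces from partial integration combine with the penalties via the pointwise identity $\tfrac12\nu\cdot\beta + |\nu\cdot\beta|_- = \tfrac12|\nu\cdot\beta|_- + \tfrac12(\nu\cdot\beta)_+ \geq \tfrac12|\nu\cdot\beta|_-$, so one recovers the $|\nu\cdot\beta|$-weighted norms on $\partial\mcO_I$ and $\partial\mcO_B$. The GLS term $\tau_1 h\|Lv\|^2_\mcO$ and the stabilization $\|v\|^2_{s_h}$ already sit in $\tn v\tn_h^2$, so (\ref{eq:Ah-coercivity}) follows.

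For continuity I would apply Cauchy--Schwarz to each summand of $a_h(v,w)$. The pairings $\tau_1 h(Lv,Lw)_\mcO$, $s_{h,d,i}(v,w)$, $(|\nu\cdot\beta|_-[v],[w])_{\partial\mcO_I}$ and $(|\nu\cdot\beta|_- v,w)_{\partial\mcO_B}$ are symmetric and each split cleanly into the corresponding squared seminorms in $\tn\cdot\tn_h$, contributing at most $\tn v\tn_h\tn w\tn_h$. The only asymmetric summand is $(Lv,w)_\mcO$, which I would bound as $(h^{1/2}\|Lv\|_\mcO)(h^{-1/2}\|w\|_\mcO)$: the first factor is controlled by $\tn v\tn_h$ while the second is exactly what the extra term $h^{-1}\|w\|^2_\mcO$ in $\tn w\tn_{h,*}^2$ supplies. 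Summing yields (\ref{eq:Ah-continuity}).

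The only delicate step is verifying the pointwise sign identity on $\partial\mcO_I \cup \partial\mcO_B$ that reconciles the penalty weight $|\nu\cdot\beta|_-$ with the signed weight $\tfrac12\nu\cdot\beta$ from Lemma~\ref{lem:partial-integration}; this is a short case distinction on the sign of $\nu\cdot\beta$. Nothing else is subtle — no discrete inverse estimate or interpolation enters, and the argument works uniformly on $W = V^e + V_h$ because $D_\beta$, $\gamma$ and $\nabla_{\IR^n}$ all act componentwise and are well-defined on the extensions in $V^e$.
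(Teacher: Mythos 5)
Your coercivity argument is correct and is essentially the paper's, except that the paper simply writes $a_h(v,v)=a(v,v)+\tau_1 h\|Lv\|^2_{\mcO}+\|v\|^2_{s_h}$ and cites Lemma~\ref{lem:coercivity} for the lower bound on $a(v,v)$, whereas you re-derive that lemma inline; your pointwise identity $\tfrac12\nu\cdot\beta+|\nu\cdot\beta|_-=\tfrac12(\nu\cdot\beta)_++\tfrac12|\nu\cdot\beta|_-\geq\tfrac12|\nu\cdot\beta|_-$ is exactly the right way to reconcile the signed terms from Lemma~\ref{lem:partial-integration} with the penalty weights. For continuity you take a genuinely different route. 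The paper keeps $(D_\beta v,w)_{\mcO}$, integrates by parts to move the derivative onto $w$, and bounds $(v,D_\beta w)_{\mcO}\leq h^{-1/2}\|v\|_{\mcO}\,h^{1/2}\|Lw\|_{\mcO}$, which places the $h^{-1}\|\cdot\|^2_{\mcO}$ contribution on the \emph{first} argument and yields the displayed estimate $a_h(v,w)\lesssim\tn v\tn_{h,*}\tn w\tn_h$ (at the cost of having to absorb the signed boundary terms $(\nu\cdot\beta[v],[w])_{\partial\mcO_I}$, whose positive part is not literally contained in the stated norms). Your direct Cauchy--Schwarz bound $(Lv,w)_{\mcO}\leq h^{1/2}\|Lv\|_{\mcO}\,h^{-1/2}\|w\|_{\mcO}$ avoids that issue entirely, but note that $h^{-1/2}\|w\|_{\mcO}$ is \emph{not} controlled by $\tn w\tn_h$, only by $\tn w\tn_{h,*}$: what you actually prove is the transposed estimate $a_h(v,w)\lesssim\tn v\tn_h\tn w\tn_{h,*}$, not the inequality as displayed. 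Since $a_h$ is nonsymmetric these are different statements. In your favour, the transposed version is the one the paper actually invokes in the error estimate (the $*$-norm must land on the interpolation error $u-\pi_h u$, not on $u-u_h$), so your proof delivers the operative inequality; but if you want the statement literally as written, you must integrate by parts as the paper does.
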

\begin{proof} The continuity (\ref{eq:Ah-continuity}) follows by 
first applying the Cauchy-Schwarz inequality in all the symmetric 
terms of $a_h$,
\begin{equation}
a_h(v, w) \lesssim  (D_\beta v,w)_{\mcO} + \tn v \tn_h \tn w \tn_h
\end{equation}
Using the integration by parts formula in the first term of the right
hand side yields
\begin{align}
 (D_\beta v,w)_{\mcO} &= -((\Div\beta) v,w)_{\mcO}  - (v,D_\beta w)_{\mcO} 
+ (\nu \cdot \beta [v],[w])_{\partial \mcO_I}
+ (\nu \cdot \beta v, w)_{\partial \mcO_B}
\\
&\leq  (v,D_\beta w)_{\mcO} + C \tn v \tn_h \tn w \tn_h
\\
&\leq h^{-1/2} \|v\|_\mcO h^{1/2} \|L w\|_\mcO + C \tn v \tn_h \tn w \tn_h
\\
&\lesssim  \tn v \tn_{h,*} \tn w \tn_h
\end{align} 
where we used the uniform bound $\|\Div\beta \|_{L^\infty(\mcO)} \lesssim 1$ 
and the definition of the norm $ \tn \cdot \tn_{h,*}$ in the last step.

The coercivity (\ref{eq:Ah-coercivity}) follows by observing that 
\begin{equation}
a_h(v,w) = a(v,w) + (\tau_1 h L v, L w)_{\mcO} + s_h(v,w)
\end{equation} 
and thus 
\begin{align}
a_h(v,v) &= a(v,v) + \tau_1 h \| L v \|^2_\mcO + \| v \|^2_{s_h}
\\
&\gtrsim \| v \|^2_\mcO + \|[v]\|^2_{|\nu \cdot \beta|,\partial \mcO_I} 
+ \| v \|^2_{|\nu \cdot \beta|,\partial \mcO_B} 
+ \tau_1 h \| L v \|^2_\mcO + \| v \|^2_{s_h}
\\
&= \tn v \tn^2_h
\end{align}
where we used Lemma \ref{lem:coercivity}.
\end{proof}

\subsection{Interpolation Error Estimates}
There is an interpolation operator $\pi_h:L^2(\Omega_{d,i}) \rightarrow V_{h,d,i}$ such 
that the following interpolation error estimate holds
\begin{equation}\label{eq:interpol-energy}
\tn v - \pi_h v \tn^2_*\lesssim h^{3} \| v \|^2_{H^{k+1}(\mcO)}
\end{equation}
We define $\pi_h$ by 
\begin{equation}
\pi_h v = \pi_{h,Cl} v^e
\end{equation} 
where $\pi_{h,Cl}: L^2(\mcT_{h,d,i}) \rightarrow V_{h,d,i}$ is the usual Clement interpolator. 
We refer to \cite{BurHanLarMas16} for further details including a proof of the basic interpolation 
estimate 
\begin{equation}
\| u - \pi_h u \|^2_{\Omega_{d,i}} + h^2 \| \nabla_d ( u - \pi_h u )\|^2_{\Omega_{d,i}}
\lesssim h^4 \| u \|^2_{H^2(\Omega_{d,i})}
\end{equation}
which is the used to derive (\ref{eq:interpol-energy}).


\subsection{Error Estimates}
\begin{thm} If $u$ is the solution to (\ref{eq:weak-problem}) satisfies $u \in H^2(\mcO)$ 
and $u_h$ is the finite element approximation defined by (\ref{eq:fem}), then 
\begin{equation}
\tn u - u_h \tn_h^2 \lesssim h^{3}  \| u \|^2_{H^{2}(\mcO)}
\end{equation}
\end{thm}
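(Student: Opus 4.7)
The strategy is the standard coercivity-plus-quasi-orthogonality argument for stabilized finite element methods on transport problems, modified to absorb the inconsistency of the full-gradient stabilizer $s_h$. I would split the error through the Clément-type interpolant from Section~4.2 as $u - u_h = \eta - \xi_h$ with $\eta = u - \pi_h u \in V^e$ and $\xi_h = u_h - \pi_h u \in V_h$, so that both $\eta$ and $\xi_h$ lie in $W$ and the continuity/coercivity estimates of the preceding lemma are available. Coercivity gives
\[
\tn \xi_h \tn_h^2 \lesssim a_h(\xi_h,\xi_h) = a_h(u_h - u, \xi_h) + a_h(\eta, \xi_h),
\]
and the theorem will follow by the triangle inequality $\tn u - u_h\tn_h \leq \tn\eta\tn_h + \tn\xi_h\tn_h$ once each term on the right is shown to be $\lesssim h^{3/2}\|u\|_{H^2(\mcO)}\tn\xi_h\tn_h$.

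The first term is controlled by a quasi-Galerkin orthogonality. Since $u \in H^2(\mcO)$ satisfies the strong form $Lu = f$ on each component and the boundary/interface contributions of $a_h$ coincide with those of $a$, every piece of $a_h(u, \cdot)$ except $s_h(u,\cdot)$ matches a term of $l_h$; subtracting the discrete equation $a_h(u_h, \xi_h) = l_h(\xi_h)$ from the identity $a_h(u,\xi_h) = l_h(\xi_h) + s_h(u,\xi_h)$ yields
\[
a_h(u - u_h, \xi_h) = s_h(u, \xi_h) \leq \|u\|_{s_h}\tn\xi_h\tn_h,
\]
by Cauchy--Schwarz in $s_h$. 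The second term is handled by the continuity estimate (\ref{eq:Ah-continuity}), which gives $|a_h(\eta,\xi_h)| \lesssim \tn\eta\tn_{h,*}\tn\xi_h\tn_h$. Dividing through by $\tn\xi_h\tn_h$ leaves
\[
\tn\xi_h\tn_h \lesssim \tn\eta\tn_{h,*} + \|u\|_{s_h}.
\]

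It remains to bound each of these two quantities by $h^{3/2}\|u\|_{H^2(\mcO)}$. The interpolation estimate (\ref{eq:interpol-energy}) directly gives $\tn\eta\tn_{h,*}^2 \lesssim h^3\|u\|_{H^2(\mcO)}^2$. For the stabilization seminorm I would use the definition
\[
\|u\|^2_{s_{h,d,i}} = \tau_2 h^{3-(n-d)} \|\nabla_{\IR^n} u^e\|^2_{\mcT_{h,d,i}}
\]
together with the standard fat-tube estimate $\|\nabla_{\IR^n} u^e\|^2_{\mcT_{h,d,i}} \lesssim h^{n-d}\|u\|^2_{H^1(\Omega_{d,i})}$ (the active mesh is an $O(h)$-thick tube of codimension $n-d$ around $\Omega_{d,i}$ and $E$ is a continuous $H^1$-extension by Section~\ref{sec:defs}). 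Summing over components gives $\|u\|^2_{s_h} \lesssim h^3\|u\|^2_{H^1(\mcO)} \lesssim h^3\|u\|^2_{H^2(\mcO)}$. Combining everything and squaring yields the claimed bound.

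The only delicate step is the quasi-orthogonality: one has to verify that the identity $Lu = f$ really holds in $L^2(\Omega_{d,i})$ for $u\in H^2(\mcO)$, remembering that the abstract operator $L = D_\beta + \gamma$ carries the lower-dimensional coupling term $-\llbracket\nu_{d+1}\cdot\beta_{d+1}[u]_{d+1}\rrbracket_d$. Given the derivation that $D_\beta u + \gamma u$ equals the componentwise expression in Section~2.3, this is bookkeeping, but it is precisely what legitimises replacing $u_h$ by $u$ inside the GLS term $(\tau_1 h Lu, L\xi_h)_\mcO$ at the cost of only $s_h(u,\cdot)$.
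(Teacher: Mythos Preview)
Your proposal is correct and follows essentially the same route as the paper: coercivity, continuity (\ref{eq:Ah-continuity}), the quasi-Galerkin orthogonality $a_h(u-u_h,v)=s_h(u,v)$ for $v\in V_h$, the interpolation bound (\ref{eq:interpol-energy}), and the fat-tube estimate for $\|u\|_{s_h}$. The only cosmetic difference is that you apply coercivity to the discrete part $\xi_h=u_h-\pi_h u$ and finish with the triangle inequality, whereas the paper applies coercivity directly to $u-u_h$ and uses a kick-back argument; both are standard and equivalent here.
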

\begin{proof} Using coercivity 
\begin{align}
\tn u - u_h \tn_h^2 &\lesssim a_h(u - u_h, u - u_h) 
\\
&\lesssim a_h(u - u_h, u - \pi_h u) + a_h(u - u_h, \pi_h u - u_h) 
\\
&\lesssim \tn u - u_h \tn_h \|u - \pi_h u\tn_{h,*} + a_h(u, \pi_h u - u_h) - l_h(\pi_h u - u_h) 
\\
&\lesssim \delta \tn u - u_h \tn^2_h + \delta^{-1} \tn u - \pi_h u\tn^2_{h,*}  + s_h(u^e, \pi_h u - u_h)
\end{align}
for $\delta>0$. Next 
\begin{align}
s_h (u, \pi_h u - u_h) &= s_h (u, \pi_h u - u ) +  s_h (u, u - u_h ) 
\\
&\leq \| u \|_{s_h} \| \pi_h u - u \|_{s_h} + \|u\|_{s_h} \| u - u_h \|_{s_h}
\\
&\leq  \| u \|^2_{s_h} 
+ \underbrace{\| \pi_h u - u \|^2_{s_h}}_{\leq \tn u - \pi_h u \tn^2_{h,*}} 
+\delta^{-1} \|u\|^2_{s_h} 
+ \delta\| u - u_h \|^2_{s_h}
\end{align}
Using kick back and taking $\delta>0$ small enough we arrive at
\begin{equation}
\tn u - u_h \tn_h^2 \lesssim \|u - \pi_h u\tn^2_{h,*}  + \| u \|^2_{s_h} 
\lesssim h^3 \| u \|_{H^2(\mcO)}^2 + h^3 \| u \|^2_{H^1(\mcO)} 
\end{equation}
where we used the interpolation error bound 
(\ref{eq:interpol-energy}) for the first term and the second was estimated as follows  
\begin{equation}
\|  u  \|^2_{s_h} 
= \sum_{d=0}^n \sum_{i=1}^{n_d} \tau_2 h^{3 - (n-d)} \| \nabla_{\IR^n} u^e \|^2_{\mcT_{h,d,i}}
\lesssim \sum_{d=0}^n \sum_{i=1}^{n_d}  h^3 \| \nabla_d u \|^2_{\Omega_{d,i}}
\end{equation}
where we used the estimate 
\begin{equation}
 \| \nabla_{\IR^n} u^e \|^2_{\mcT_{h,d,i}} 
 \lesssim \| \nabla_d u \|^2_{\mcT_{h,d,i}} 
 \lesssim h^{n-d} \| \nabla_d u \|^2_{\Omega_{d,i}}
\end{equation}
which completes the proof.
\end{proof}

\section{Numerical Examples}

\paragraph{Implementation.}
To generate numerical examples we implemented the method \eqref{eq:fem} in 2D, i.e. $n=2$, which means that in our examples the fractured domains may consist of bulk domains ($d=2$), cracks ($d=1$), and bifurcation points ($d=0$).
We first generate a background triangle mesh $\mcT_{h,0}$ embedding the complete geometry and from this mesh we extract an active mesh for each bulk domain, crack domain and bifurcations point, see Figure~\ref{fig:meshes}. On each active mesh we then define a finite element space consisting of linear elements. Note that, while we do generate an active mesh and corresponding linear finite element space for each bifurcation point, this is actually not required as the solution there will only be a point value making it redundant to define a finite element.

\paragraph{Parameters and Meshes.} In all our examples below we use the Galerkin least squares parameter $\tau_1=10^{-2}$, stabilization parameter $\tau_2=10^{-3}$ and $\alpha_{d,i}=0$. Also, in all examples the background mesh $\mcT_{h,0}$ is a triangulation of the unit square $\Omega_0 = [0,1]^2$ with mesh parameter $h=0.1$. The resulting active meshes for Example 1--3 are presented in Figure~\ref{fig:ex1-3-mesh} while the active meshes for Example 4, which also includes a bifurcation point, are presented in Figure~\ref{fig:ex4-mesh}.

\paragraph{Example 1: Crack with in-flow.} This simple example is outlined in Figure~\ref{fig:ex1} where a crack divides the unit square in half. Here the vector fields $\{\beta_{2,i}\}$ in the bulk domains only goes into the crack resulting in the solution on the crack being effected by the bulk solutions but not the other way around. For this example we can actually derive an exact solution where $u=1$ in the bulk domains and $u=2y$ on the crack. As this solution lies in $V_h$ our numerical approximation coincides with the exact solution.

\paragraph{Example 2: Crack with out-flow.} In this example presented in Figure~\ref{fig:ex2} we revert the bulk vector fields in Example 1, yielding a crack with only out-flow to the bulk. As expected the solution in the bulk is affected by the solution on the crack but not the other way around. Also in this case we can derive the exact solution, $u=e^{-2y}$, which is well approximated by our numerical solution.

\paragraph{Example 3: Flow crossing a crack.} In Figure~\ref{fig:ex3} we consider the same geometry as in previous examples but with diagonal bulk vector fields passing through the crack. First, in Figure~\ref{fig:ex3-num1} we consider the case where the vector field in the crack is zero which results in there being no transport in the crack. The presence of the crack in this case actually doesn't effect the solution at all which gives some modeling possibilities as the presence of a crack also allow for discontinuous solutions. Increasing the crack vector field, we note in Figures~\ref{fig:ex3-num2}--\ref{fig:ex3-num3}, that the solution is transported along the crack when passing to the other side.

\paragraph{Example 4: Cracks with a bifurcation point.} This example is presented in Figure~\ref{fig:case4-illustration} and the active meshes used are presented in Figure~\ref{fig:ex4-mesh}. In contrast to previous examples we here also include a bifurcation point where the crack splits. From the top and bottom bulk domains we have flow into the crack while we from the third bulk domain have flow out of the crack. First, in Figure~\ref{fig:case4-same}, we consider the case where the vector fields on the cracks all are unit vectors in the tangential direction. We note that the solution flowing into the bifurcation point is then evenly divided between the two cracks flowing out of the bifurcation point.
In Figure~\ref{fig:case4-diff} we change the relation of the vector fields between the top and bottom cracks, i.e. the cracks flowing out of the bifurcation point, yielding a slightly different distribution. This change also effects the in-flow from the bulk regions which is clear by inspecting the crack solutions further away from the bifurcations point.

\paragraph{Example 5: System of cracks.}
As a final example we in Figure~\ref{fig:ex-tree} consider a system of cracks affected by in-flow from bulk domains. In this case the vector fields on the cracks are again unit vectors in the tangential direction. Thus, at each bifurcation point the sum of the crack solutions flowing into a bifurcation point will equal the sum of the crack solutions flowing out of the bifurcation point.

\section{Conclusions}
We develop a cut finite element method for a convection problem on a fractured domain. 
The upshot of the method is that the mesh does not need to conform to the embedded manifolds, 
which in practice is very convenient. The cut elements are handled using certain stabilization 
terms which leads to a stable method with optimal order convergence properties. Different methods 
may be used to discretize the PDE, and we have here chosen to study a least squares stabilized 
formulation which is convenient to implement and analyze. Some directions for future work 
include existence and uniqueness results for convection problems on fractured domains, 
extensions to convection diffusion problems, higher order methods, time dependent problems, 
and coupled problems with both flow equations and transport.
\begin{figure}
\centering
\begin{subfigure}[t]{.30\linewidth}\centering
\includegraphics[width=0.9\linewidth]{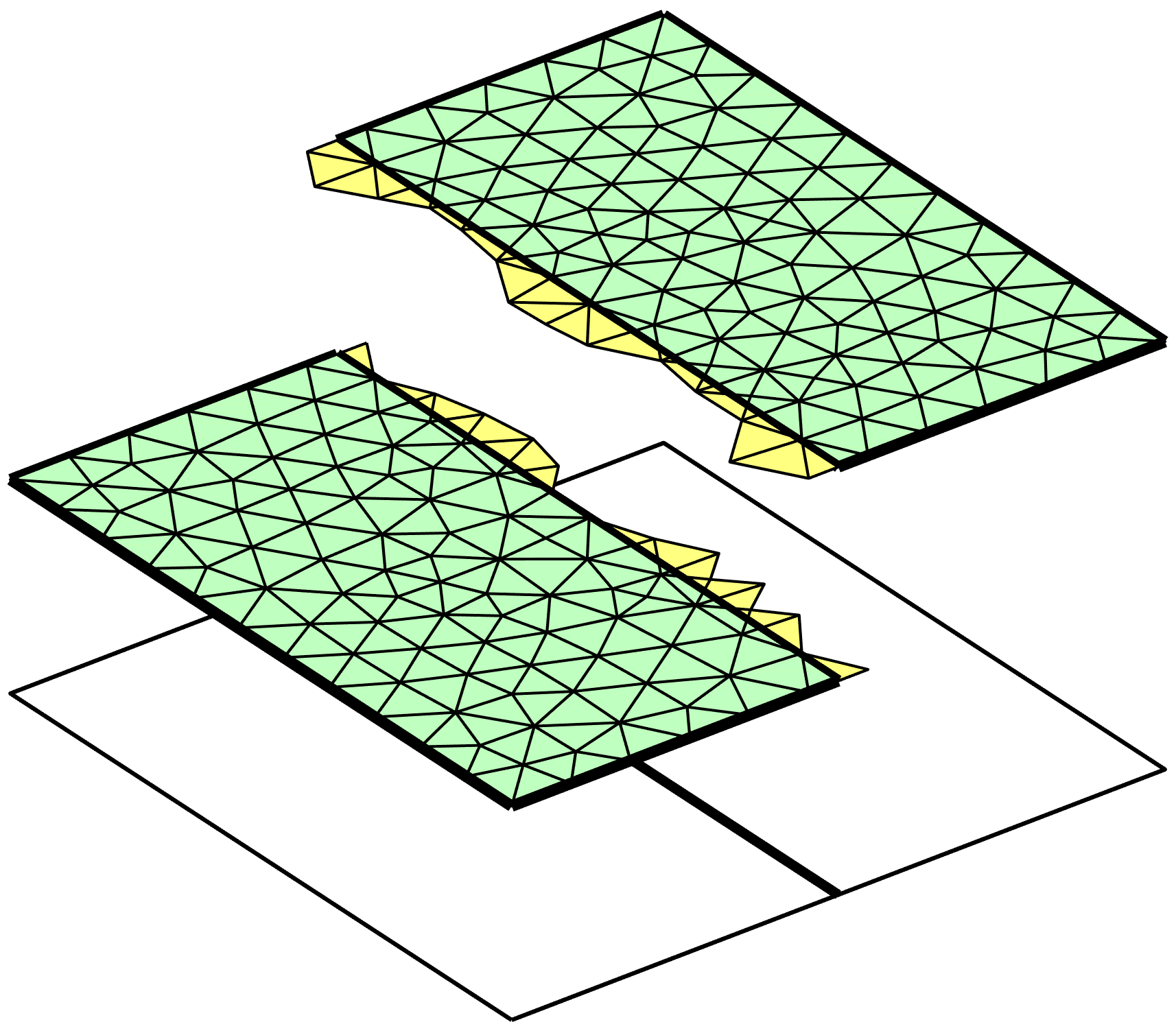}
\caption{$d=2$} \label{fig:case1-mesh-bulk}
\end{subfigure}
\begin{subfigure}[t]{.30\linewidth}\centering
\includegraphics[width=0.9\linewidth]{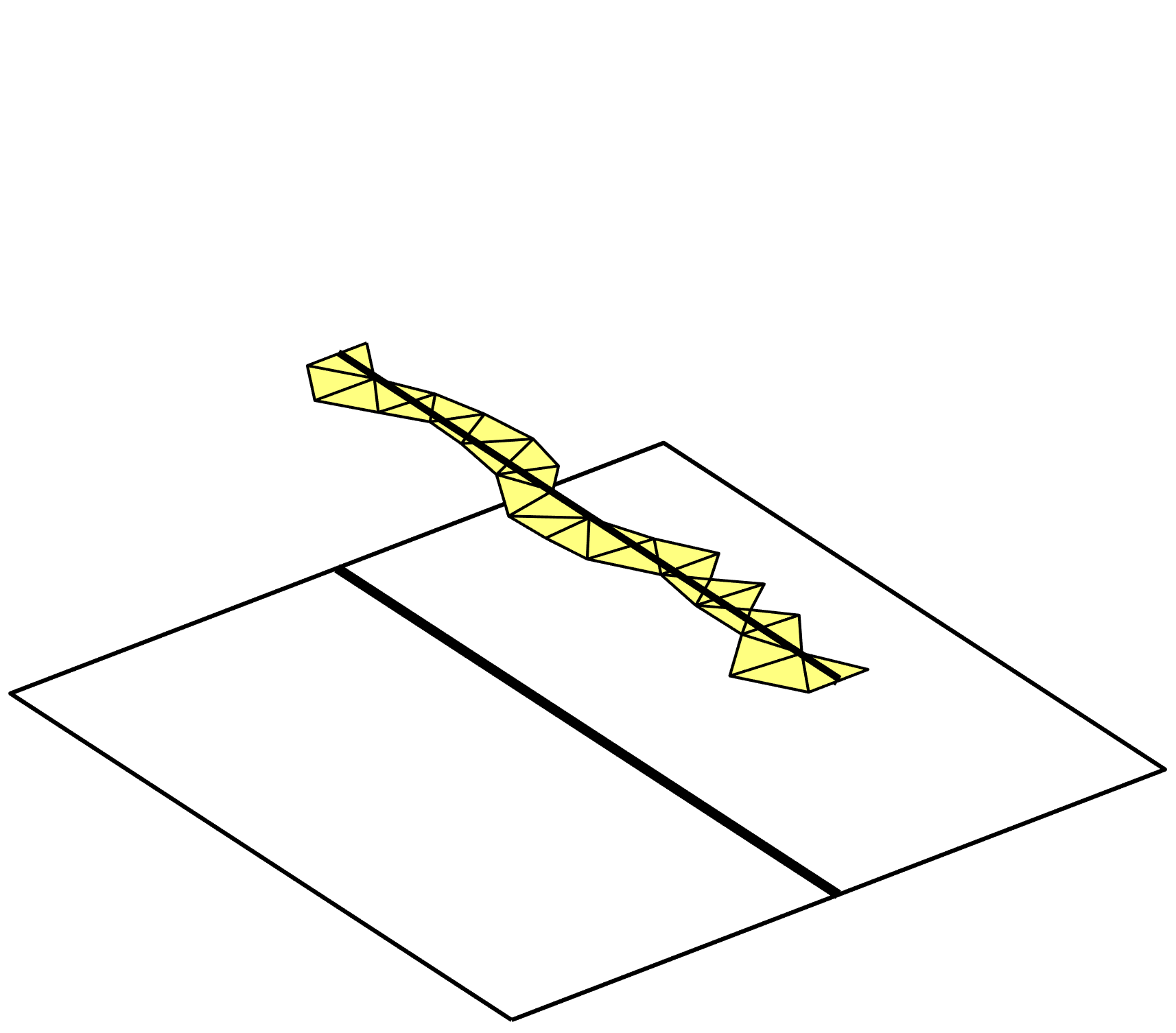}
\caption{$d=1$}\label{fig:case1-mesh-crack}
\end{subfigure}
\caption{Active meshes ($h=0.1$) used for Examples 1--3 where a single crack divides the unit square into two equal parts.}
\label{fig:ex1-3-mesh}
\end{figure}

\begin{figure}
\centering
\begin{subfigure}[t]{.3\linewidth}\centering
\includegraphics[height=0.75\linewidth]{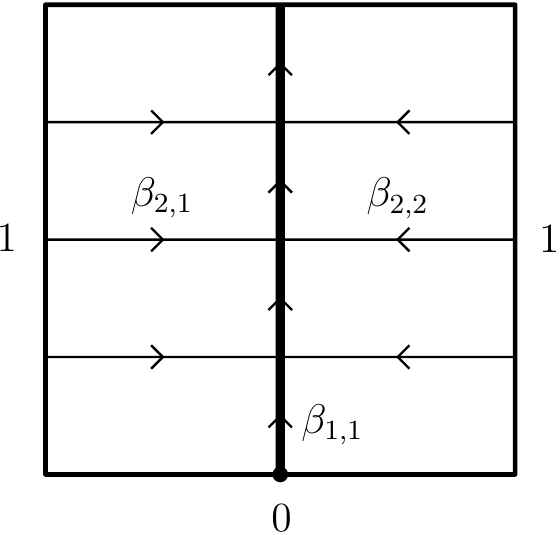}
\caption{Set-up} \label{fig:ex1-illustration}
\end{subfigure}
\begin{subfigure}[t]{.3\linewidth}\centering
\includegraphics[height=0.75\linewidth]{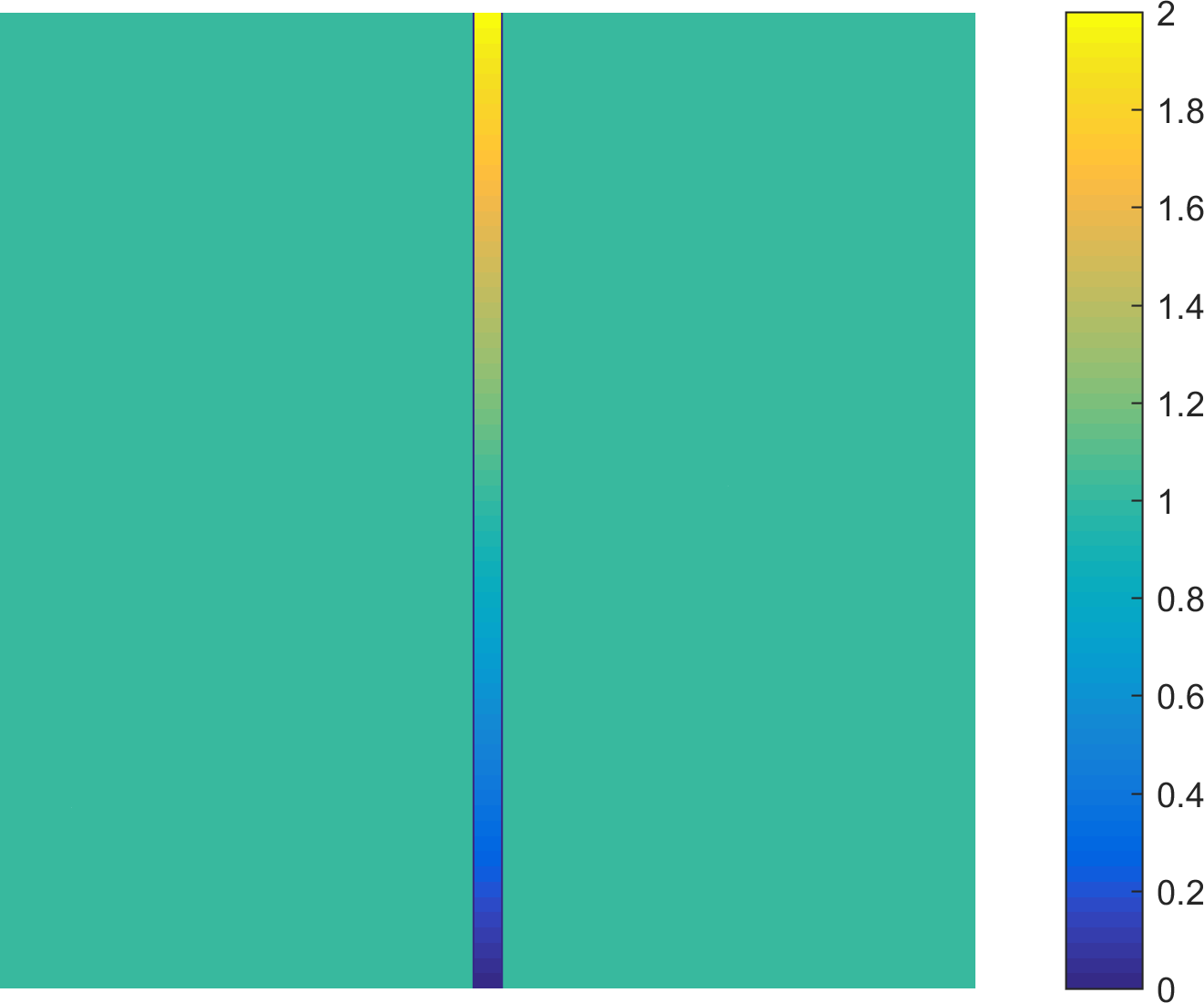}
\caption{Numerical solution}\label{fig:ex1-num}
\end{subfigure}
\caption{Crack with in-flow (Example 1).
(a) The set-up for this example is $\beta_{2,1}=[1,0]$, $\beta_{2,2}=-\beta_{2,1}$ and in the crack $\beta_{1,1} = [0,1]$.
(b) The numerical solution corresponds well to the exact solution which is $u=1$ in the two bulk domains and $u=2y$ on the crack.
}
\label{fig:ex1}
\end{figure}

\begin{figure}
\centering
\begin{subfigure}[t]{.3\linewidth}\centering
\includegraphics[height=0.75\linewidth]{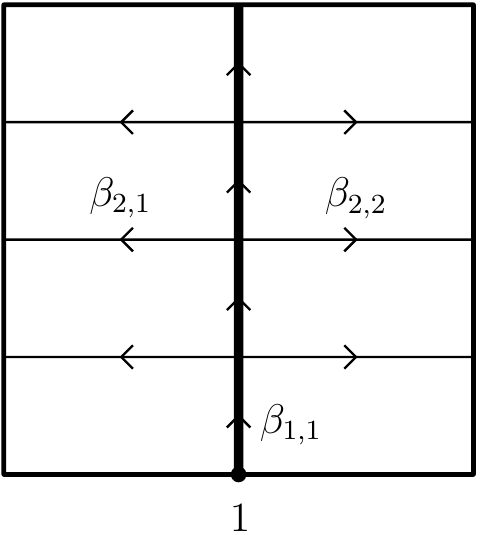}
\caption{Set-up} \label{fig:ex2-illustration}
\end{subfigure}
\begin{subfigure}[t]{.3\linewidth}\centering
\includegraphics[height=0.75\linewidth]{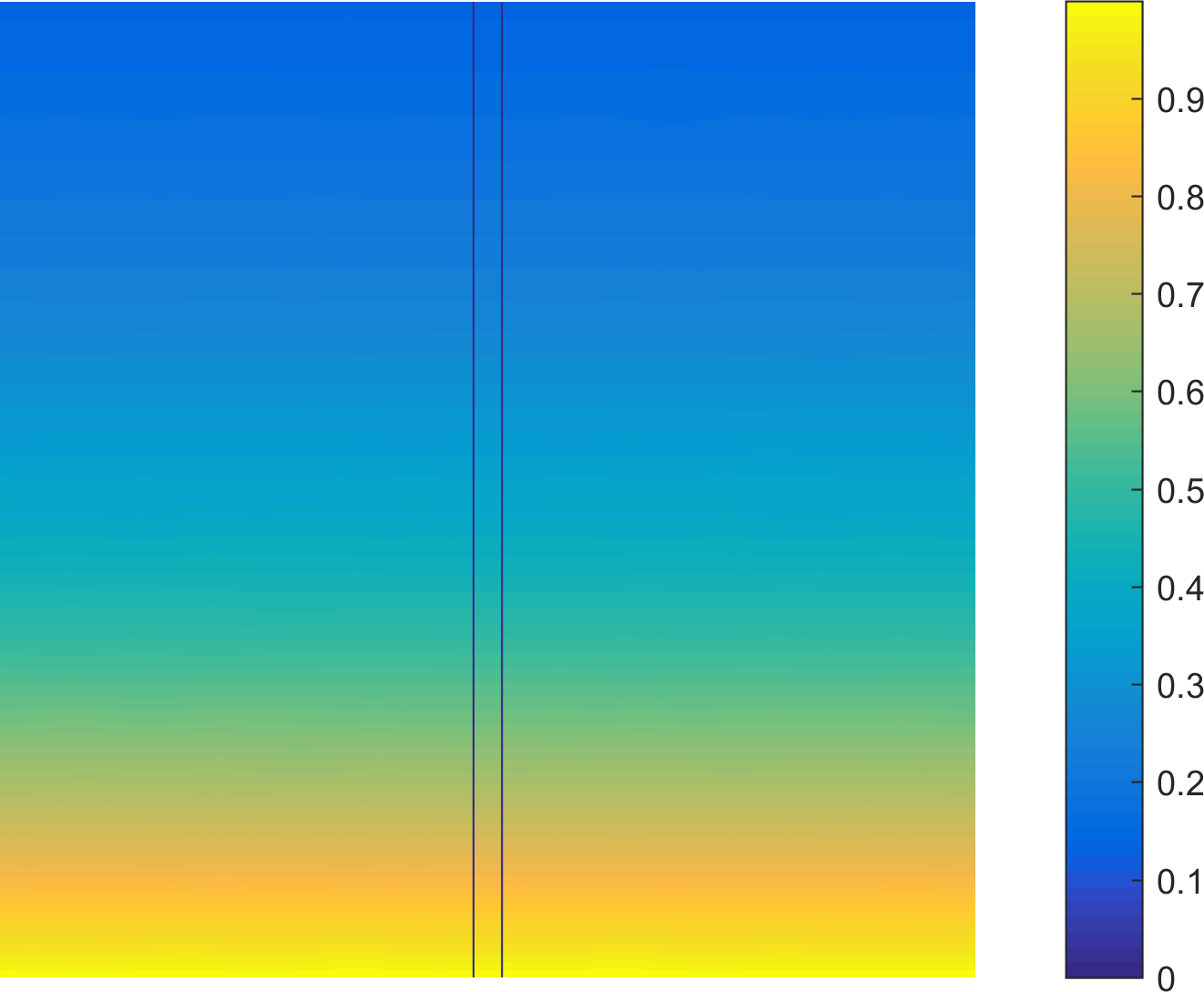}
\caption{Numerical solution}\label{fig:ex2-num}
\end{subfigure}
\caption{Crack with out-flow (Example 2).
(a) The set-up for this example is $\beta_{2,1}=[-1,0]$, $\beta_{2,2}=-\beta_{2,1}$ and in the crack $\beta_{1,1} = [0,1]$.
(b) The numerical solution corresponds well to the exact solution which is $u=e^{-2y}$ in all domains.
}
\label{fig:ex2}
\end{figure}

\begin{figure}
\centering
\begin{subfigure}[t]{.24\linewidth}\centering
\includegraphics[width=0.87\linewidth]{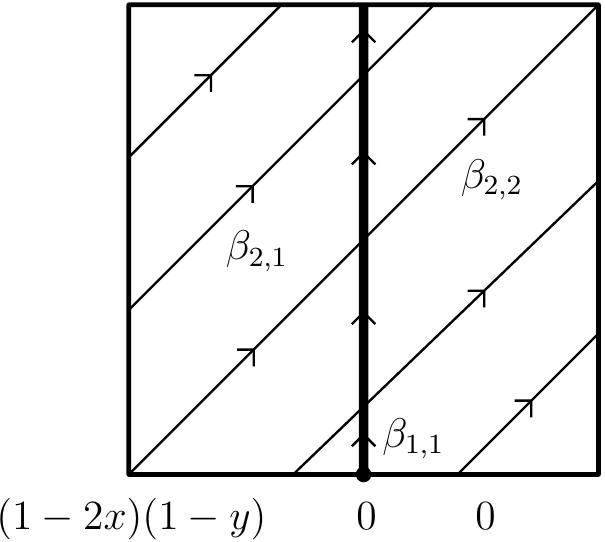}
\caption{Set-up} \label{fig:ex3-illustration}
\end{subfigure}
\begin{subfigure}[t]{.24\linewidth}\centering
\includegraphics[width=0.95\linewidth]{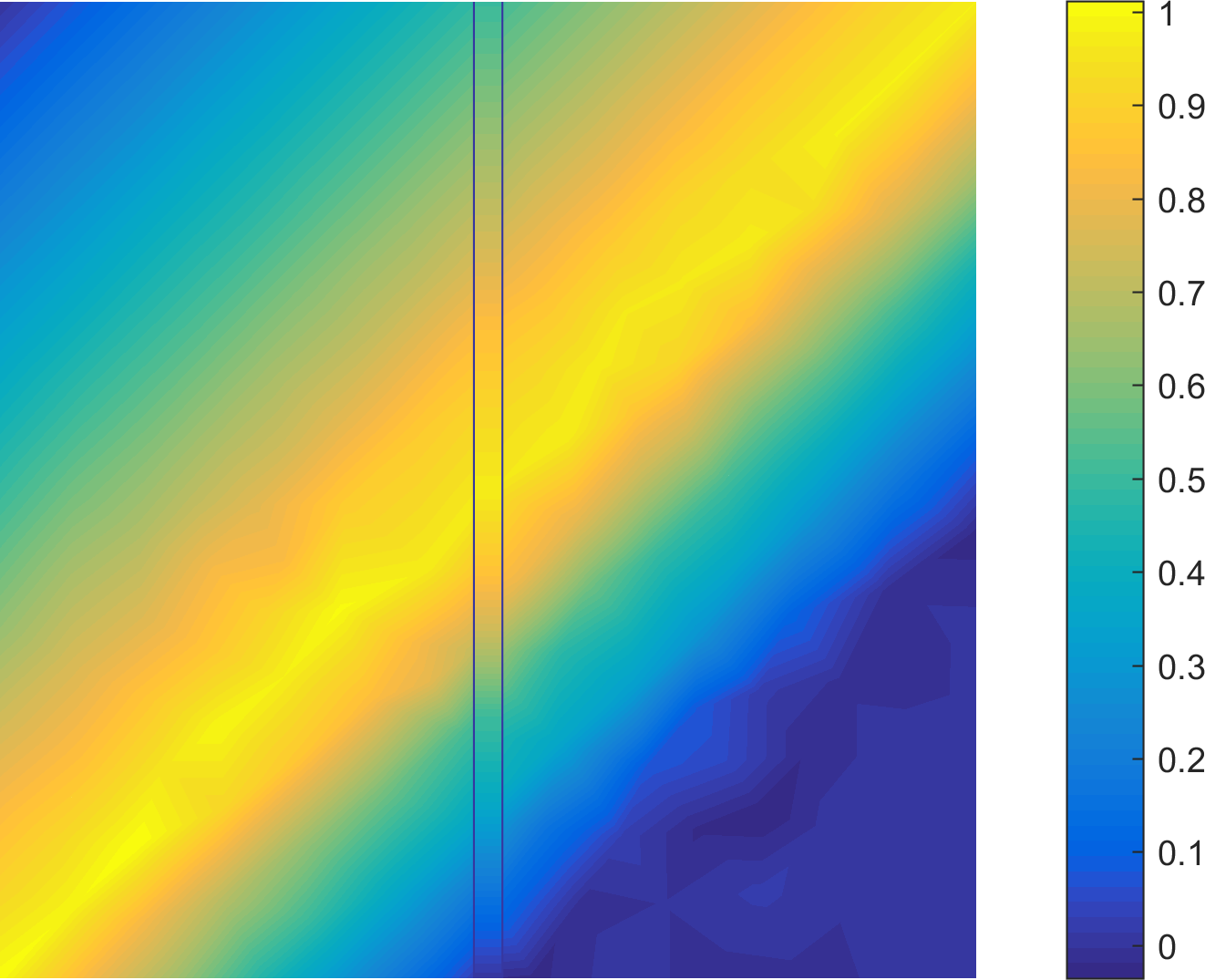}
\caption{$\beta_{1,1} = [0,0]$}\label{fig:ex3-num1}
\end{subfigure}
\begin{subfigure}[t]{.24\linewidth}\centering
\includegraphics[width=0.95\linewidth]{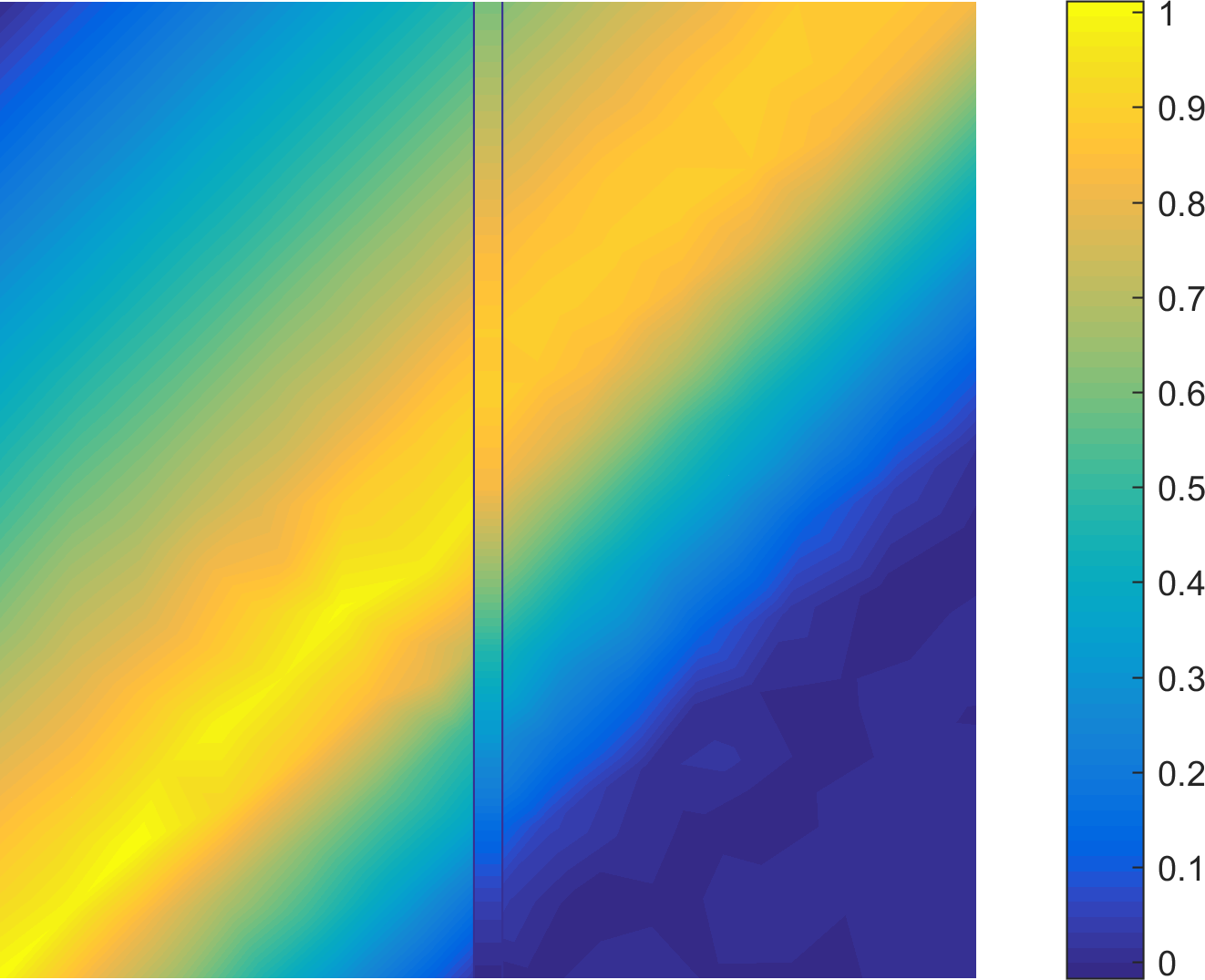}
\caption{$\beta_{1,1} = [0,0.1]$} \label{fig:ex3-num2}
\end{subfigure}
\begin{subfigure}[t]{.24\linewidth}\centering
\includegraphics[width=0.95\linewidth]{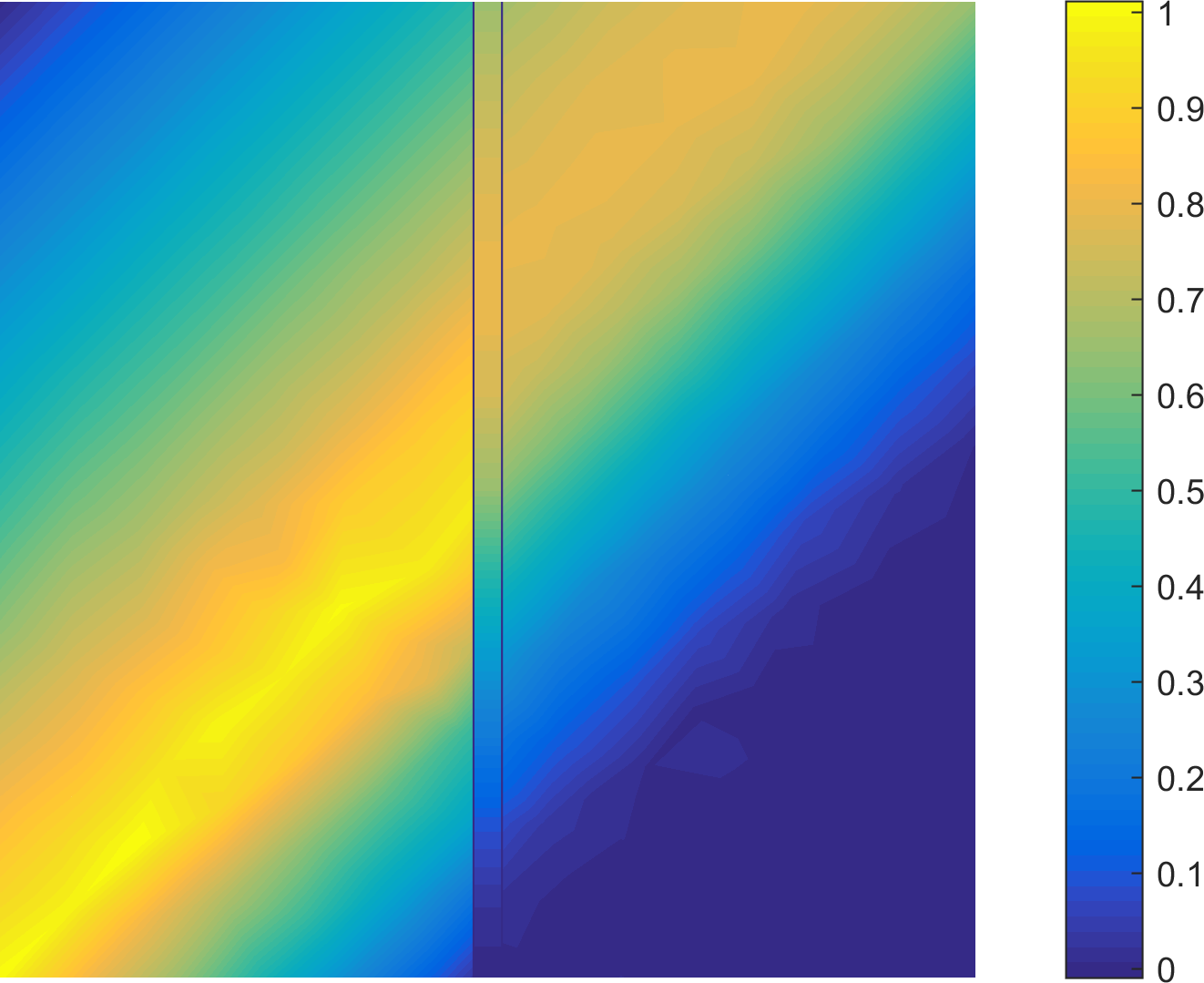}
\caption{$\beta_{1,1} = [0,0.2]$}\label{fig:ex3-num3}
\end{subfigure}
\caption{Flow crossing a crack (Example 3).
(a) The set-up for this example is $\beta_{2,1}=\beta_{2,2}=[1,1]$ and with $\beta_{1,1}$ in the crack varying in the numerical solutions (b)--(d) as specified by their captions. Note that in case (b) the solution is not affected by the presence of a crack.
}
\label{fig:ex3}
\end{figure}

\begin{figure}
\centering
\begin{subfigure}[t]{.25\linewidth}\centering
\includegraphics[width=0.9\linewidth]{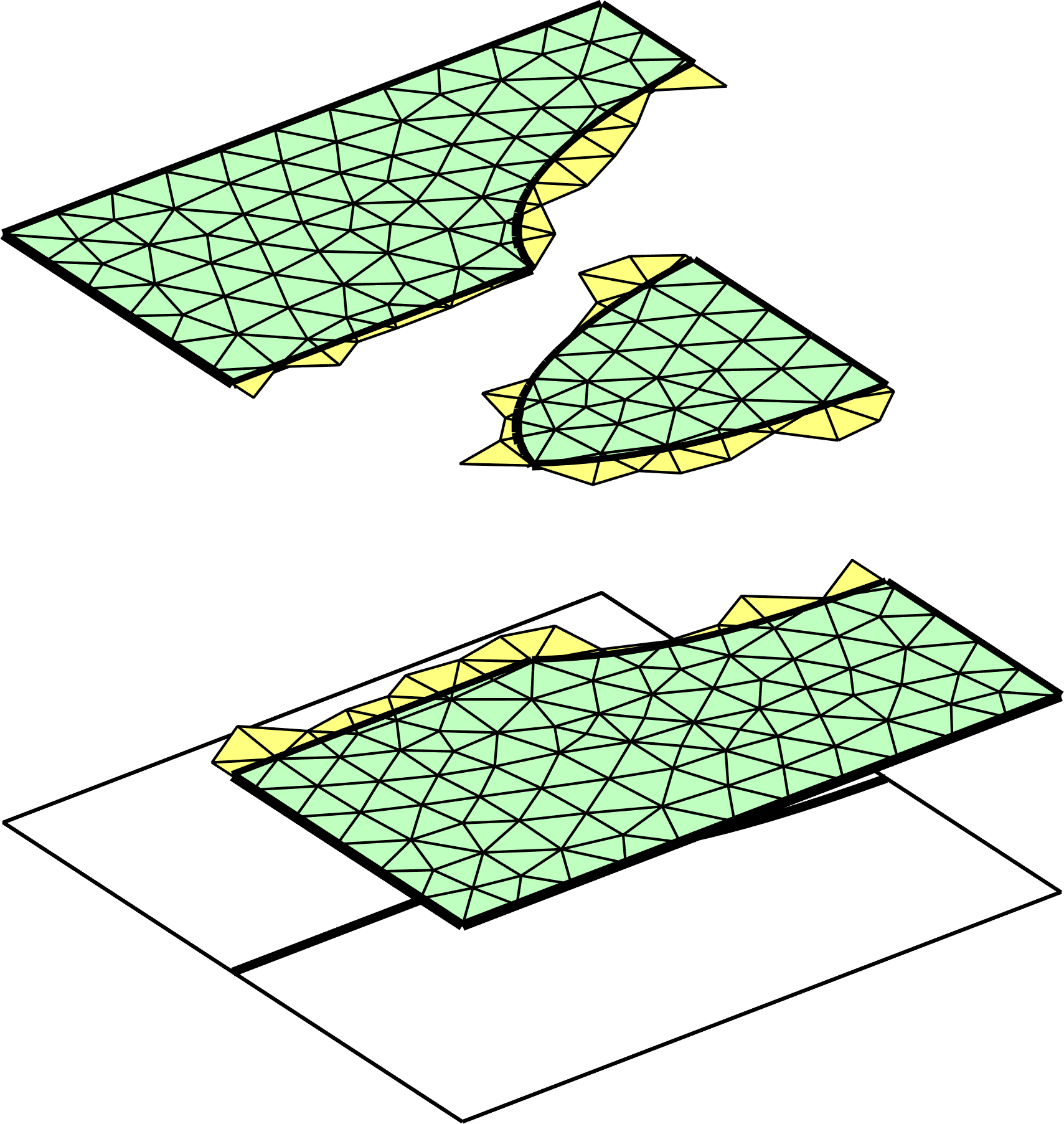}
\caption{$d=2$} \label{fig:case4-mesh-bulk}
\end{subfigure}
\begin{subfigure}[t]{.25\linewidth}\centering
\includegraphics[width=0.9\linewidth]{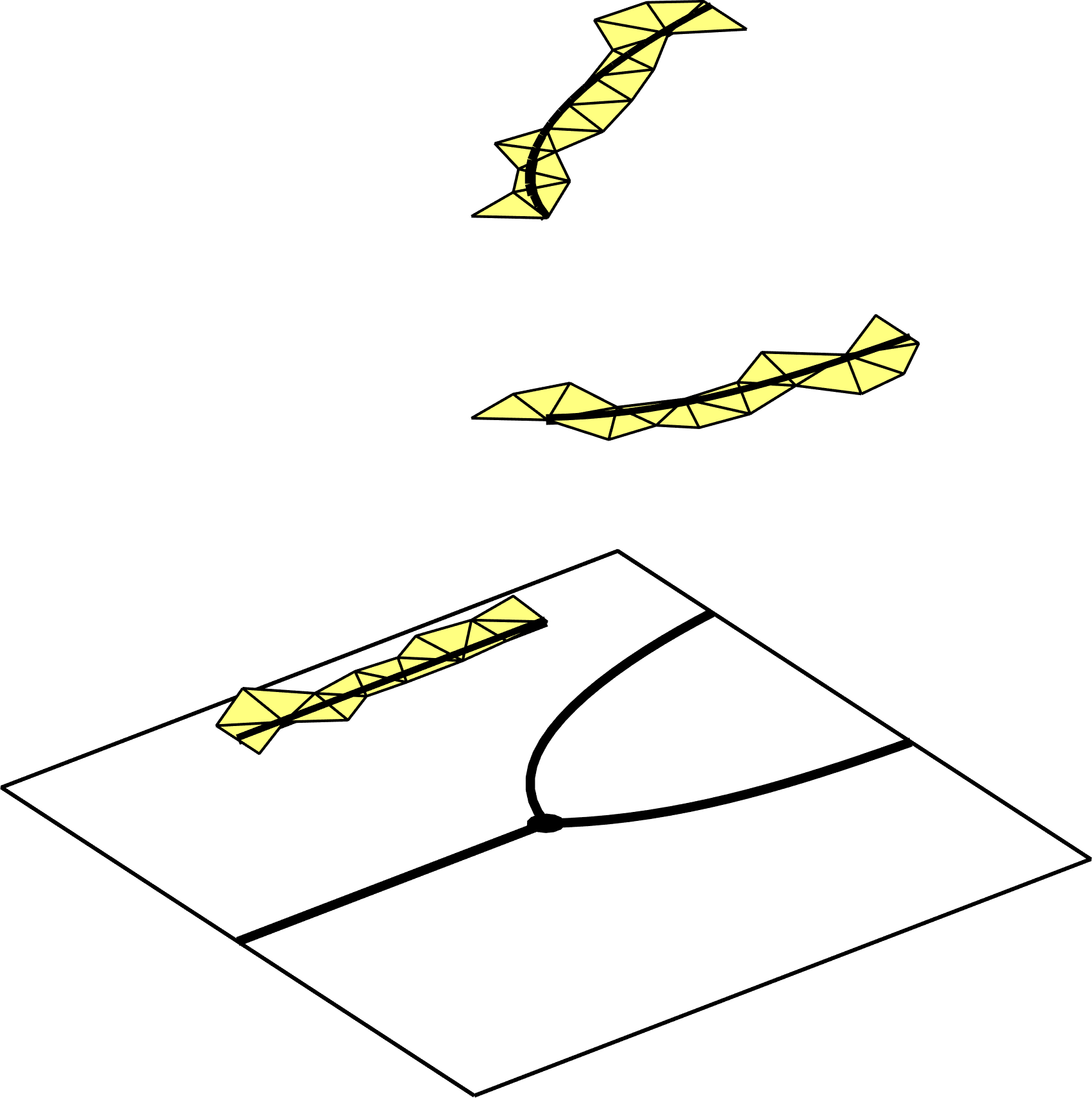}
\caption{$d=1$}\label{fig:case4-mesh-crack}
\end{subfigure}
\begin{subfigure}[t]{.25\linewidth}\centering
\includegraphics[width=0.9\linewidth]{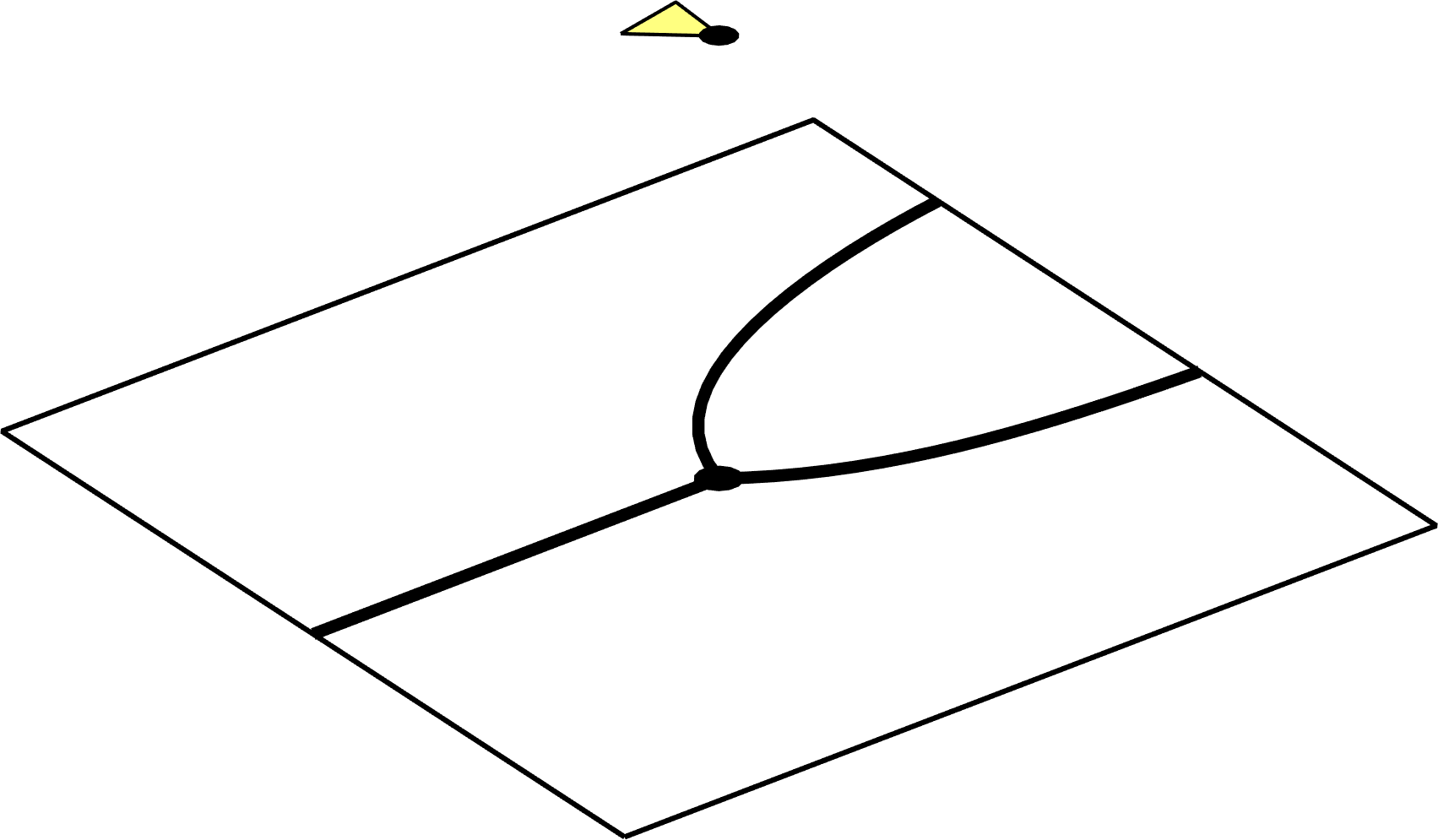}
\caption{$d=0$}\label{fig:case4-mesh-point}
\end{subfigure}
\caption{Active meshes used in Example 4 ($h=0.1$). (a) Meshes for the bulk domains. (b) Meshes for the cracks. (c) Mesh for the bifurcation point.}
\label{fig:ex4-mesh}
\end{figure}

\begin{figure}
\centering
\begin{subfigure}[t]{.3\linewidth}\centering
\includegraphics[height=0.75\linewidth]{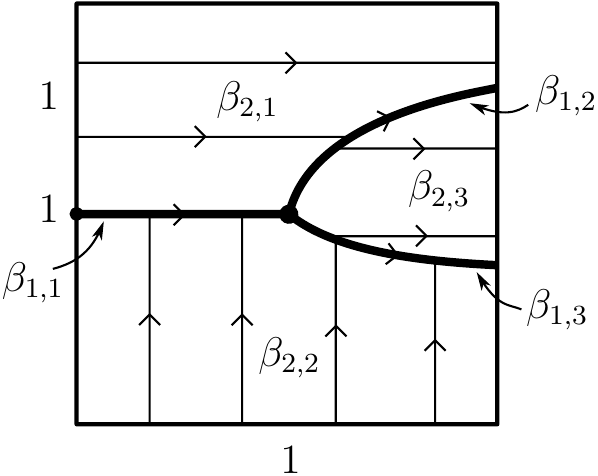}
\caption{Set-up} \label{fig:case4-illustration}
\end{subfigure}
\begin{subfigure}[t]{.3\linewidth}\centering
\includegraphics[height=0.75\linewidth]{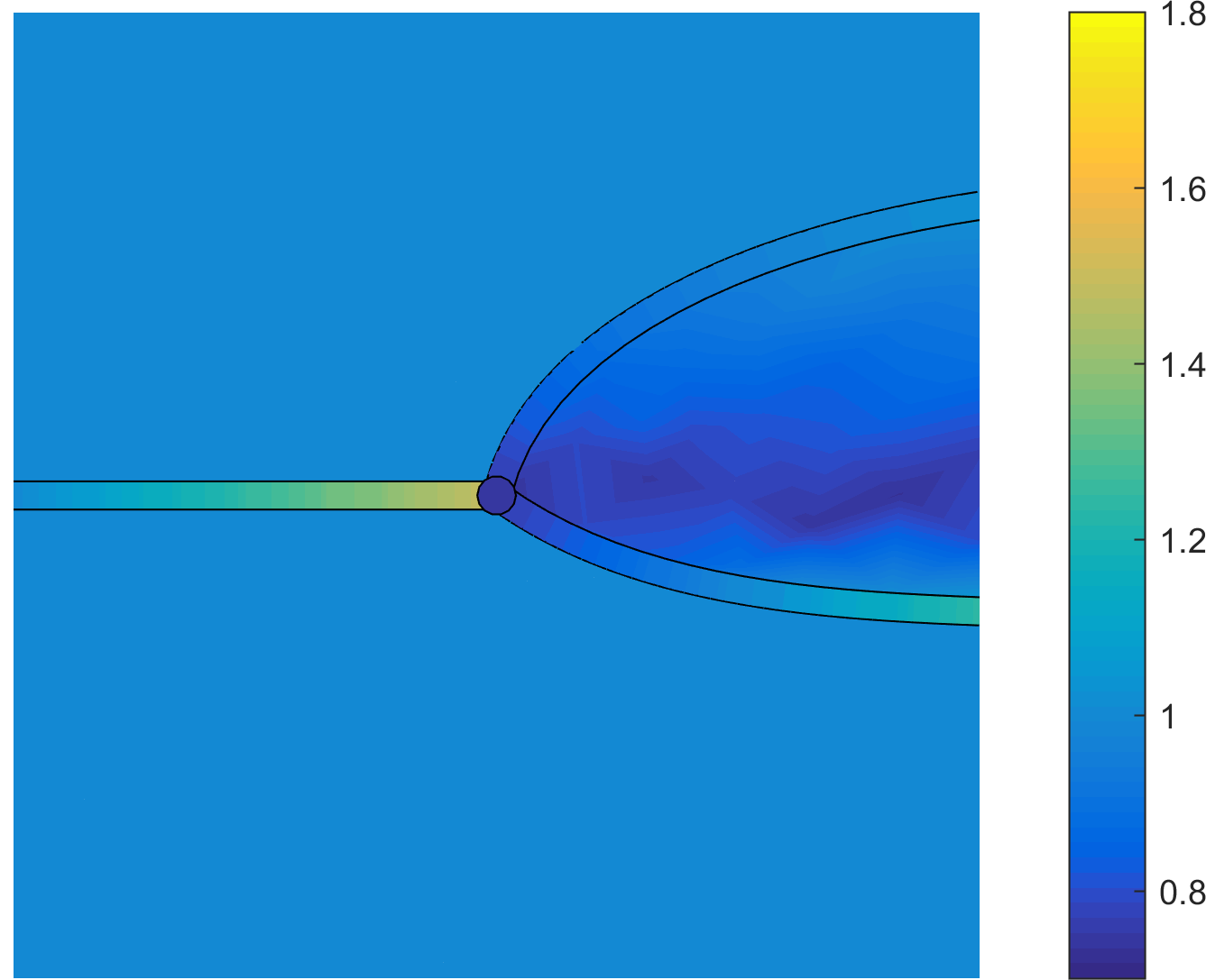}
\caption{$\beta_{1,2}=\beta_{1,3}=t$}\label{fig:case4-same}
\end{subfigure}
\begin{subfigure}[t]{.3\linewidth}\centering
\includegraphics[height=0.75\linewidth]{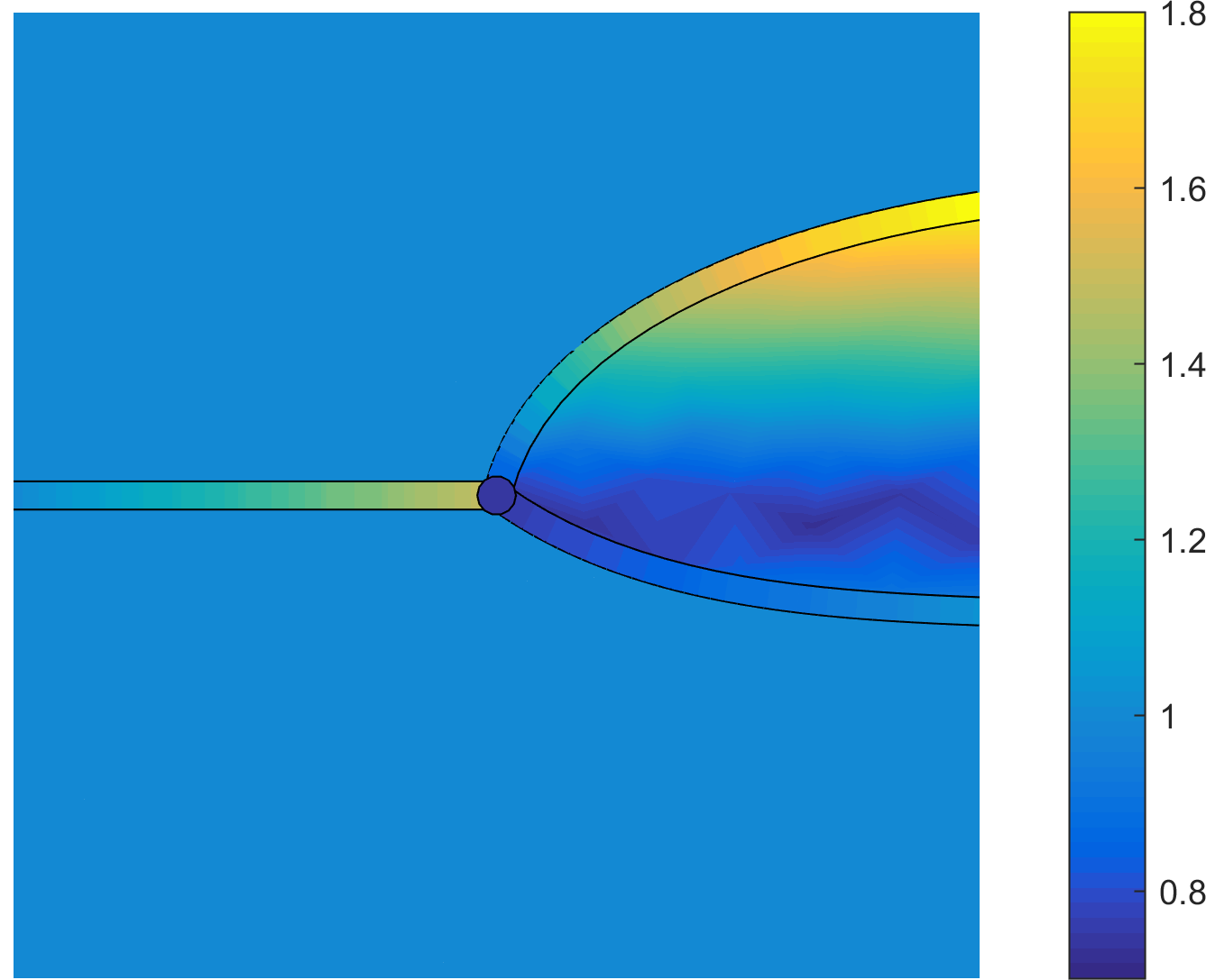}
\caption{$\beta_{1,2}=0.25t$, $\beta_{1,3}=1.75t$}\label{fig:case4-diff}
\end{subfigure}
\caption{Cracks with a bifurcation point (Example 4).
(a) Here $\beta_{2,1}=[1,0]$, $\beta_{2,2}=[0,1]$, $\beta_{2,3}=[0.1,0]$, $\beta_{1,1}=[1,0]$ while $\beta_{1,2}$ and $\beta_{1,3}$ changes between the examples. (b)--(c) Numerical solutions using values of $\beta_{1,2},\beta_{1,3}$ specified in the captions where $t$ is the unit tangent in the cracks.
}
\label{fig:ex4}
\end{figure}

\begin{figure}
\centering
\begin{subfigure}[t]{.3\linewidth}\centering
\includegraphics[height=0.75\linewidth]{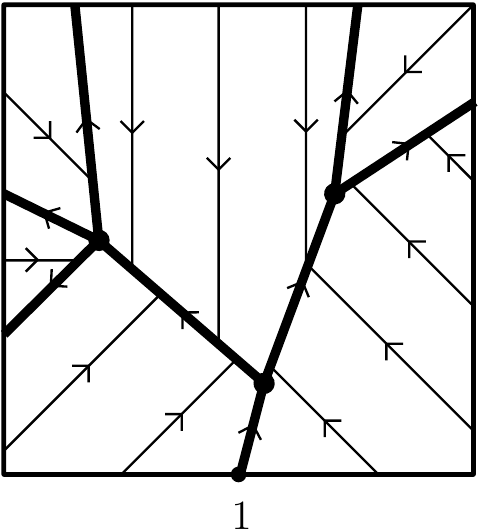}
\caption{Set-up} \label{fig:tree-illustration}
\end{subfigure}
\begin{subfigure}[t]{.3\linewidth}\centering
\includegraphics[height=0.75\linewidth]{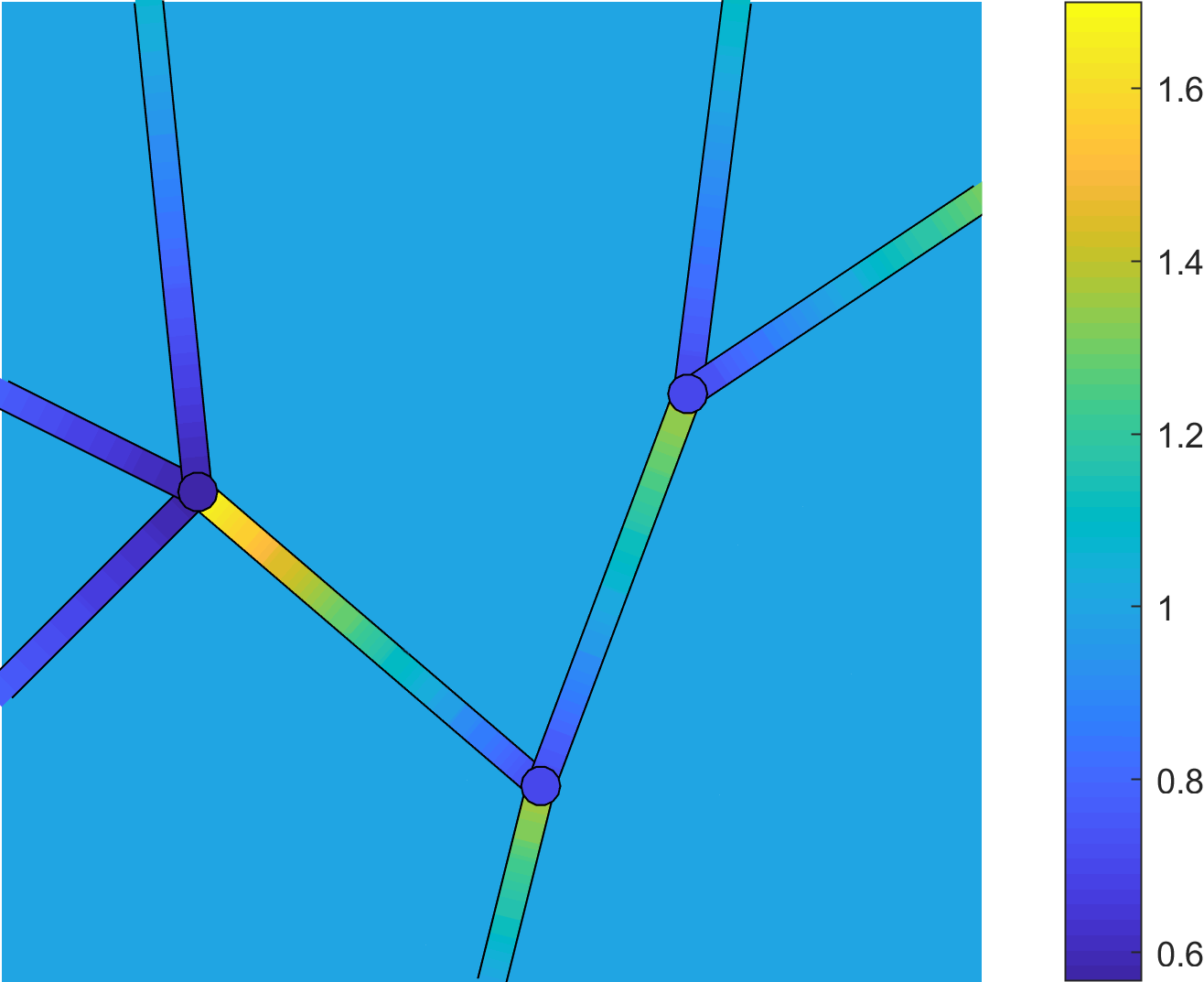}
\caption{Numerical solution}\label{fig:tree-num}
\end{subfigure}
\caption{System of cracks with in-flow (Example 5).
(a) Starting in the lower left corner and traversing the bulk domains clockwise $\beta_{2,i}$ is $[1,1]$, $[1,0]$, $[1,-1]$, $[0,-1]$, $[-1,-1]$ and $[-1,1]$. In the cracks $\beta_{1,i}$ is 1 in the tangent direction according to the figure. All boundary values are 1.
(b) In the numerical solution ($h=0.1$) we note that the crack solutions at each bifurcation point divide the in-flow solution equally among the out-flow solutions.
}
\label{fig:ex-tree}
\end{figure}
\newpage
\bibliographystyle{abbrv}
\footnotesize{
\bibliography{layer_biblio}
}

\bigskip
\bigskip
\noindent
\footnotesize {\bf Acknowledgements.}
This research was supported in part by the Swedish Foundation
for Strategic Research Grant No.\ AM13-0029, the Swedish Research
Council Grants Nos.\  2013-4708, 2017-03911, and the Swedish
Research Programme Essence. EB was supported by EPSRC research grants EP/P01576X/1 and EP/P012434/1.

\bigskip
\bigskip
\noindent
\footnotesize {\bf Authors' addresses:}

\smallskip
\noindent
Erik Burman,  \quad \hfill \addressuclshort\\
{\tt e.burman@ucl.ac.uk}

\smallskip
\noindent
Peter Hansbo,  \quad \hfill \addressjushort\\
{\tt peter.hansbo@ju.se}

\smallskip
\noindent
Mats G. Larson,  \quad \hfill \addressumushort\\
{\tt mats.larson@umu.se}

\smallskip
\noindent
Karl Larsson, \quad \hfill \addressumushort\\
{\tt karl.larsson@umu.se}

\end{document}